\newcommand{\msc}[2][2000]{%
  \let\@oldtitle\@title%
  \gdef\@title{\@oldtitle\footnotetext{#1 \emph{Mathematics subject
        classification.} #2}}%
}
\def\R{{\bf R}}
\def\N{{\bf N}}
\def\({\left(}
\def\){\right)}
\def\<{\left\langle}
\def\>{\right\rangle}
\def\Tend#1#2{\mathop{\longrightarrow}\limits_{#1\rightarrow#2}}
\def\eps{\varepsilon}
\def\op{{\rm op}}
\newcommand{\eqref}[1]{(\ref{#1})}
\newcommand{\To}{\longrightarrow}
\begin{document}

\title*{Semiclassical analysis of dispersion phenomena}
\author{Victor Chabu, Clotilde~Fermanian-Kammerer, Fabricio Maci\`{a}}
\institute{V. Chabu \at Universidade de São Paulo, IF-USP, DFMA, CP 66.318 05314-970, São Paulo, SP, Brazil,
\email{vbchabu@if.usp.br},\\
C. Fermanian Kammerer \at LAMA, UMR CNRS 8050,
Universit\'e Paris Est,
61, avenue du G\'en\'eral de Gaulle
94010 Cr\'eteil Cedex, France, 
\email{Clotilde.Fermanian@u-pec.fr}\\
F. Macia \at Universidad Polit\'{e}cnica de Madrid. DCAIN, ETSI Navales. Avda. de la Memoria 4, 28040 Madrid, Spain,
\email{fabricio.macia@upm.es}}

\maketitle

{\bf Abstract} : {\it Our aim in this work is to give some quantitative insight on the dispersive effects exhibited by solutions of a semiclassical Schr\"odinger-type equation in $\R^d$. We  describe quantitatively the localisation of the energy in  a long-time semiclassical limit within this non compact geometry and exhibit conditions under which the energy remains localized on compact sets. We also explain how our results can be applied in a straightforward way to describe obstructions to the validity of smoothing type estimates. }

\section{Introduction}
\subsection{Description of the problem}
Our aim in this work is to revisit some of the results obtained in \cite{CFM} in order to give some quantitative insight on the dispersive effects exhibited by solutions of the semiclassical Schrödinger-type  equation: 
\begin{equation}\label{eq:scdisp}
\left\{ \begin{array}{l}
i\eps \partial_{t} v^{\eps}(t,x) = \lambda(\eps D_x) v^{\eps}(t,x) + \eps^2 V(x) v^{\eps}(t,x) , \quad (t,x)\in\R\times\R^d, \vspace{0.2cm}\\
v^{\eps}|_{t=0}= u_{0}^{\eps},
\end{array}\right.
\end{equation}
Above, $\lambda,V \in \mathcal{C}^\infty (\R^d;  \R)$; the function $\lambda$ is the symbol of the semiclassical Fourier multiplyier defined by:
$$\forall v \in L^2(\R^d),\quad\lambda(\eps D_x)v(x):=\int_{\R^d}\lambda(\eps \xi)\widehat{v}(\xi)\mathrm{e}^{i\xi\cdot x}\frac{d\xi}{(2\pi)^d},$$
where, in general, the integral has to be understood in distributional sense. The following convention for the Fourier transform is used:
$$\widehat{v}(\xi):=\int_{\R^d}v(x)\mathrm{e}^{-i\xi\cdot x}d\xi.$$
Our goal is to understand the behavior as $\eps\to 0^+$ of solutions to \eqref{eq:scdisp} corresponding to sequences of initial data $(u^\eps_0)$ whose characteristic length-scale of oscillations is of order at least $\eps$ (see \eqref{def:xoscclassical} for a precise definition) at very long times, of the order of $1/\eps$.

To this aim, we scale in time the solutions to \eqref{eq:scdisp} and define:
$$u^\eps(t,\cdot):=v^\eps\left(\frac{t}{\eps},\cdot \right).$$
Note that these functions solve the following problem.
\begin{equation}\label{eq:disp}
\left\{ \begin{array}{l}
i\eps^2 \partial_{t} u^{\eps}(t,x) = \lambda(\eps D_x) u^{\eps}(t,x) + \eps^2 V(x) u^{\eps}(t,x) , \quad (t,x)\in\R\times\R^d, \vspace{0.2cm}\\
u^{\eps}|_{t=0}= u_{0}^{\eps},
\end{array}\right.
\end{equation}
If the symbol $\lambda$ happens to be homogeneous of degree two, \eqref{eq:disp} reduces to the non-semiclassical equation:
$$i \partial_{t} u^{\eps}(t,x) = \lambda(D_x) u^{\eps}(t,x) +V(x) u^{\eps}(t,x).$$
In what follows we shall consider sequences of initial data $(u^\eps_0)$ that are bounded in $L^2(\R^d)$. Denote by $(u^\eps)$ the corresponding sequence of solutions to \eqref{eq:disp} and construct the position densities:
$$n^\eps(t,x):=|u^\eps(t,x)|^2.$$
For every $t\in\R$, the sequence $(n^\eps(t,\cdot))$ is bounded in $L^1(\R^d)$, since 
$$||n^\eps(t,\cdot)||_{L^1(\R^d)}=||u^\eps(t,\cdot)||_{L^2(\R^d)}=||u^\eps_0||_{L^2(\R^d)};$$
 it is not difficult to show, using the fact that $u^\eps$ solve \eqref{eq:disp}, that there exists a subsequence $\eps_n\to 0^+$ and a $t$-measurable family of finite positive Radon measures $\nu_t(dx)$ on $\R^d$ such that
the space-time averages of the position densities $(n^{\eps_n})$ converge:
\begin{equation}\label{eq:density}
\lim_{n\to\infty}\int_a^b \int_{\R^d} \phi(x) |u^{\eps_n}(t,x)|^2 dx dt=\int_a^b \int_{\R^d} \phi(x)\nu_t(dx)dt,
\end{equation}
for every $a<b$ and every $\phi\in{\mathcal C}_0(\R^d)$. The limiting measure $\nu_t$ is sometimes called a \textit{defect measure} of the sequence $(u^\eps)$. It will follow from our results that defect measures give indeed a quantitative description of the lack of dispersion for solutions to \eqref{eq:scdisp}. 

\medskip

The long-time semiclassical limit has been studied with some detail in the context of Schrödinger equations in compact geometries, see for instance \cite{MaciaAv, MaciaTorus, AM:14, AFM:15}. In the compact setting, the dispersive nature of the equation manifests through more subtle mechanisms, and is intimately related to the global dynamics of the underlying classical system.

\medskip 

When the potential $V$ in \eqref{eq:disp} is identically equal to zero, simple calculations can be implemented for specific initial data. Construct for example, for  $\xi_0\in\R^d$, and $\theta\in {\mathcal S}(\R^d)$ with $\| \theta\|_{L^2(\R^d)}=1$: 
\begin{equation}\label{eq:data0}
u^\eps_{\xi_0}(x)=\theta(x) {\rm e}^{{i\over \eps} \xi_0\cdot x}.
\end{equation}
These sequences of initial data are highly oscillating as soon as $\xi_0\neq 0$ and therefore converge weakly to zero in $L^2(\R^d)$; however $|u^\eps_{\xi_0}|^2=|\theta|^2$ is independent of $\eps$ and $\xi_0$. 

A direct application of the stationary/non-stationary phase principle gives for any $\phi\in\mathcal{C}_c(\R^d)$ and any $a<b$ that the following limits hold. 
\begin{itemize}
\item If $\xi_0$ is not a critical point of $\lambda$, then 
$$\lim_{\eps\to 0^+}\int_a^b \int_{\R^d} \phi(x) |\mathrm{e}^{\frac{it}{\eps^2}\lambda(\eps D_x)}u^\eps_{\xi_0}(x)|^2 dx dt=0.$$
\item 
If $\xi_0$ is a critical point of $\lambda$, then 
$$\lim_{\eps\to 0^+}\int_a^b \int_{\R^d} \phi(x) |\mathrm{e}^{\frac{it}{\eps^2}\lambda(\eps D_x)}u^\eps_{\xi_0}(x)|^2 dx dt= \int_a^b\int_{\R^d} \phi(x) \left| {\rm e}^{-\frac{it}{2} \nabla^2 \lambda(\xi_0) D_x\cdot D_x} \theta(x)\right|^2 dxdt,$$
where $\nabla^2 \lambda(\xi_0)$ denotes the Hessian of $\lambda$ at the point $\xi_0$.
\end{itemize}

When $\xi_0$ is not a critical point of the symbol $\lambda$, the fact that no energy remains on any compact set in the high frequency limit is precisely a manifestation of dispersive behavior of the semiclassical problem~\eqref{eq:scdisp}. However, when $\xi_0$ happens to be a critical point of $\lambda$, such a dispersive behavior fails, and a fraction of the energy remains localized on compact sets of $\R^d$. Note that $\xi_0=0$ plays a special role in this setting, since it corresponds to initial data that are not oscillating. Therefore, even if $\xi_0=0$ is a critical point of $\lambda$,\footnote{Think for instance of $\lambda(\xi) = \| \xi \|^2$, for which \eqref{eq:disp} corresponds to the standard, non-semiclassical, Schrödinger equation, one of the most studied dispersive equations.} the fact that the local energy does not escape from every compact set as $\eps\to 0^+$ in this case should not be interpreted as a lack of dispersion of \eqref{eq:scdisp}. 

\medskip

The situation can be more intricate for initial data which are superposition of oscillating functions of the form above:
\begin{equation}\label{eq:data0bis}
u^\eps_0(x)=\theta_1(x) {\rm e}^{{i\over \eps} \xi_1\cdot x}+\theta_2(x) {\rm e}^{{i\over \eps} \xi_2\cdot x}
\end{equation}
with $\theta_1,\theta_2\in {\mathcal S}(\R^d)$, both non-zero, and $\xi_1,\xi_2\in\R^d$ such that $\xi_2$ is a critical point of $\lambda$ while $\xi_1$ is not. One easily checks that:
\begin{equation}\label{eq:ex}
\lim_{\eps\to 0^+}\int_a^b \int_{\R^d} \phi(x) |u^\eps(t,x)|^2 dx dt= \int_a^b\int_{\R^d} \phi(x) \left| {\rm e}^{-\frac{it}{2} \nabla^2 \lambda(\xi_2) D_x\cdot D_x} \theta_2(x)\right|^2 dxdt,
\end{equation}
which shows that only a fraction of the (asymptotic) total mass $\| \theta_1\|^2_{L^2(\R^d)}+\| \theta_2\|^2_{L^2(\R^d)}$ of the sequence of solutions is dispersed in this case. 

\medskip

Our aim here, is to provide a general description of these high frequency effects. In particular, we will generalize the analysis done in the previous examples to arbitrary sequence of initial data, and investigate the effects produced by the presence of a bounded non-zero potential $V$. We will also show in Corollary~\ref{cor:smoothing} how our results can be applied in a straightforward way to describe obstructions to the validity of smoothing type estimates for equations of the form \eqref{eq:disp} in the presence of critical points of the symbol $\lambda$.

\subsection{Non dispersive effects associated to isolated critical points}

We are first going to show that, in the presence of isolated critical points of $\lambda$, some of the high frequency effects exhibited by the sequence of initial data persist after applying the time evolution~(\ref{eq:disp}). As we said before, we give a complete description of the asymptotic behavior of the densities $|u^\eps(t,x)|^2$ associated to a  sequence of solutions to \eqref{eq:disp} issued from a sequence of initial data $\left(u_{0}^{\eps}\right)_{\eps >0}$ bounded in $L^2(\R^d)$. When the critical points of the symbol $\lambda$ are non-degenerate, we present an explicit procedure to compute all weak-$\star$ accumulation points of the sequence of time-dependent positive measures $\left(|u^{\eps}(t,\cdot)|^2\right)_{\eps >0}$ in terms of quantities that only depend on the sequence of initial data.

\medskip 

In order to prevent that all the mass of the sequence $\left(|u^{\eps}(t,\cdot)|^2\right)_{\eps >0}$ trivially escapes to infinity, we must make sure that the characteristic length scale of the oscillations of the sequence of initial data is at least of order $\eps$. The following, now standard, assumption is sufficient for our purposes:

\begin{enumerate}
\item[\textbf{A0}] The family $(u^\eps_0)_{\eps > 0}$ is uniformly bounded in $L^2(\R^d)$ and $\eps$-oscillating, in the sense that its energy is concentrated on frequencies smaller or equal than $1/\eps$ :
\begin{equation}\label{def:xoscclassical}
\limsup_{\eps\rightarrow 0}\int_{\|\xi\|>R/\eps} | \widehat {u^\eps_0} (\xi) | ^2 d\xi \Tend{R}{+\infty} 0,
\end{equation} 
\end{enumerate} 

In order to keep the presentation relatively simple, we also impose smoothness and growth conditions on $\lambda$ and $V$. More precisely:

\begin{enumerate}
\item[\textbf{A1}] $V\in \mathcal{C}^\infty(\R^d)$ is bounded together with all its derivatives and that $\lambda\in \mathcal{C}^\infty(\R^d)$ is a symbol of order $N>0$ (as in~\cite{DimassiSjostrand}, definition~7.5):
$$\forall\alpha\in\N^d,\quad \sup_{\xi\in\R^d}|\partial_\xi^\alpha\lambda(\xi)|\left(1+\|\xi\|\right)^{-N}<\infty.$$
\end{enumerate}

Our last hypothesis deals with the set of critical points of the symbol: 
$$\Lambda := \{\xi \in \R^d : \nabla\lambda(\xi) = 0 \}.$$ 
In our first result, we assume the following.
\begin{enumerate}
\item[\textbf{A2}] $\Lambda$ is a countable set of $\R^d$. 
\end{enumerate} 
 
\begin{theorem}\label{theo:disc}
Assume that the sequence of initial data $\left(u^\eps_0\right)_{\eps > 0}$ verifies {\bf A0} and that $\lambda$ and $V$ satisfy {\bf A1} and \textbf{A2}; denote by~$\left(u^\eps\right)_{\eps > 0}$ the corresponding family of solutions to (\ref{eq:disp}). Suppose $(\eps_n)_{n\in\N}$ is a subsequence along which $(|u^{\eps_n}|^2)_{n\in\N}$ converges, in the sense of \eqref{eq:density}, to some defect measure $\nu_t(dx)dt$. Then, for almost every $t\in\R$ the following holds:
\begin{equation}\label{eq:limu}
\nu_t(dx)\geq \sum_{\xi\in \Lambda}|u_\xi (t,x)|^2dx,
\end{equation}
where $u_\xi$ is a solution to the following Schrödinger equation:
\begin{equation}\label{eq:schrohprofil}
\left\{\begin{array}{l}
i\partial_t u_\xi(t,x) =\frac{1}{2}\nabla^2\lambda(\xi)D_x\cdot D_x u_\xi(t,x) +V(x)u_\xi(t,x),\medskip\\
u_\xi|_{t=0}=u_\xi^0,
\end{array} \right.
\end{equation}
and $u_\xi^0$ is the limit, for the weak topology on $L^2(\R^d)$, of the sequence $({\rm e}^{-\frac{i}{\eps_n} \xi \cdot x} u^{\eps_n}_0)_{n\in\N}$. 

\noindent If in addition, all critical points of  $\lambda$ are non-degenerate, then inequality~(\ref{eq:limu}) becomes an equality.
\end{theorem}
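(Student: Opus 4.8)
The natural tool here is the semiclassical (Wigner) measure machinery, specialized near each critical point $\xi\in\Lambda$. The plan is to introduce, for each fixed $\xi\in\Lambda$, the rescaled/modulated sequence $w^{\eps}_\xi(t,x) := \mathrm{e}^{-\frac{i}{\eps}\xi\cdot x} u^{\eps}(t,x)$, which strips off the oscillation associated with the frequency $\xi/\eps$. Writing $\lambda(\eps D_x)$ in terms of $\eps D_x = \eps D_y + \xi$ after this modulation, one Taylor-expands $\lambda$ around $\xi$: since $\nabla\lambda(\xi)=0$, the first nontrivial term is the quadratic one $\tfrac12\nabla^2\lambda(\xi)(\eps D)\cdot(\eps D)$, so after dividing the equation by $\eps^2$ one finds that $w^{\eps}_\xi$ solves, up to an $o(1)$ error (controlled using the symbol bound \textbf{A1} and a cutoff exploiting \textbf{A0}), the Schrödinger equation \eqref{eq:schrohprofil}. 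By well-posedness of \eqref{eq:schrohprofil} in $L^2$ and the fact that $w^{\eps}_\xi|_{t=0} = \mathrm{e}^{-\frac{i}{\eps}\xi\cdot x}u^{\eps}_0 \wto u^0_\xi$ weakly, standard weak-continuity of the linear flow gives $w^{\eps_n}_\xi(t,\cdot)\wto u_\xi(t,\cdot)$ in $L^2(\R^d)$ for (almost) every $t$. Weak lower semicontinuity of the $L^2$ norm under a test function then yields $\nu_t(dx)\ge |u_\xi(t,x)|^2 dx$ for each individual $\xi$.

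To upgrade to the \emph{sum} over $\Lambda$ in \eqref{eq:limu}, the key point is an orthogonality/decoupling argument: the profiles attached to distinct critical points $\xi\ne\xi'$ oscillate at asymptotically different frequencies, so the corresponding pieces of $u^{\eps}$ are asymptotically orthogonal in $L^2$. Concretely, for any finite subset $F\subset\Lambda$ one writes $u^{\eps}(t,\cdot) = \sum_{\xi\in F}\chi_\xi(\eps D)u^{\eps}(t,\cdot) + (\text{remainder})$, where $\chi_\xi$ is a smooth cutoff localizing near $\xi$ with disjoint supports; the cross terms $\langle \chi_\xi(\eps D)u^{\eps},\phi\,\chi_{\xi'}(\eps D)u^{\eps}\rangle$ vanish in the limit by a non-stationary phase / frequency-separation estimate, and on the $\xi$-block one identifies the limit with $|u_\xi(t,x)|^2$ as above. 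Taking $F\uparrow\Lambda$ (which is countable by \textbf{A2}) and using monotone convergence gives \eqref{eq:limu}.

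For the equality statement under the non-degeneracy hypothesis, the plan is to show that \emph{all} of the mass of $\nu_t$ is accounted for by the critical points, i.e.\ no mass concentrates at frequencies away from $\Lambda$ and none escapes detection. Here one works with the full Wigner transform $W^{\eps}(t,x,\xi)$ of $u^{\eps}(t,\cdot)$ and its (subsequential) limit, a semiclassical measure $\mu_t(dx,d\xi)$, whose projection onto $x$ dominates $\nu_t$. The transport equation satisfied by $\mu_t$ forces its support (as far as the part contributing to $\nu_t$ on compact sets, at finite time) to lie over $\Lambda\times\R^d$: away from $\Lambda$ the group velocity $\nabla\lambda(\xi)\ne 0$ is nonzero, so that part of the measure is transported to infinity and contributes nothing to $\nu_t$ on compact sets for a.e.\ $t$ (this is exactly the dispersive mechanism illustrated in the Introduction). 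Non-degeneracy of the critical points is what makes $\Lambda$ discrete \emph{and} makes the second-order normal form \eqref{eq:schrohprofil} exact at leading order with an invertible Hessian, so that near each $\xi$ the rescaled Wigner measure is precisely the one generated by $u_\xi$ and nothing is lost in second-microlocalization around $\xi$. Summing these exact contributions recovers $\nu_t$ and turns \eqref{eq:limu} into an equality.

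The main obstacle I expect is the rigorous decoupling at the critical set together with the ``no loss'' part of the equality: one must rule out mass of $\nu_t$ hiding in a second microlocal layer at scales between $\eps$ and $1$ around a degenerate-looking direction, and show that the frequency-localized pieces near $\Lambda$ genuinely converge to the $u_\xi$ rather than merely dominating. This is precisely where \textbf{A0} ($\eps$-oscillation, preventing escape to frequency infinity) and non-degeneracy (making the quadratic approximation faithful, with no intermediate scales) enter, and making that uniform over the countable set $\Lambda$ — with errors summable — is the delicate bookkeeping step.
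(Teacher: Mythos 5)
Your argument for the inequality \eqref{eq:limu} follows essentially the paper's route: modulate by ${\rm e}^{-\frac{i}{\eps}\xi\cdot x}$, identify the weak limit of the demodulated solutions with the solution of \eqref{eq:schrohprofil}, and sum over the countable set $\Lambda$. Two small imprecisions there: you must also strip off the scalar phase ${\rm e}^{-\frac{it}{\eps^2}\lambda(\xi)}$ before taking weak limits (otherwise the demodulated sequence has no weak limit at all when $\lambda(\xi)\neq0$, even though its modulus is unaffected), and the paper obtains the summability over $\Lambda$ for free from the positivity of the Wigner measure $\mu_t$ restricted to the disjoint fibers $\{\xi=\xi_0\}$, which is cleaner than, but equivalent to, your almost-orthogonality computation with disjoint frequency cut-offs. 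Up to these points the first half of your proposal is sound.

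The genuine gap is in the equality statement. You correctly identify the danger --- mass concentrating on a critical point $\xi_0$ at rates intermediate between $\eps$ and $1$, which is invisible to the weak limit $u_{\xi_0}$ --- but you then only assert that non-degeneracy ``makes the quadratic approximation faithful, with no intermediate scales.'' That is not an argument: the quadratic approximation is just as faithful for data oscillating at frequency $\xi_0+\eps^{\beta}\omega_0$ with $\omega_0\in\ker\nabla^2\lambda(\xi_0)$, and Proposition \ref{prop:disp1} shows such data do contribute to $\nu_t$. The mechanism that actually eliminates the intermediate scales is dynamical. The paper introduces the second-microlocal measure $\gamma_t$ on $\R^d\times{\bf S}^{d-1}$, which captures exactly this part of the mass, proves (Theorem \ref{theo:2micro}(i), via a commutator computation with the test symbols $a^{R,\delta}$) that $\gamma_t$ is invariant under the flow $(x,\omega)\mapsto(x+s\,\nabla^2\lambda(\xi_0)\omega,\omega)$, and then applies Lemma \ref{lem:classicdisp}: a finite positive measure invariant under a flow whose non-stationary orbits escape linearly to infinity must be carried by the stationary points. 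When $\nabla^2\lambda(\xi_0)$ is invertible the flow has no stationary points, hence $\gamma_t\equiv0$ and equality holds; when it is degenerate, $\gamma_t$ can survive on directions in $\ker\nabla^2\lambda(\xi_0)$. Without this propagation-plus-invariance step, which is the heart of the second half of the theorem, your proof of equality does not close.
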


We will show below (see Proposition \ref{prop:disp1}) that, when at least one of the critical points of $\lambda$ is degenerate, there exist sequences of initial data for which inequality \eqref{eq:limu} is strict. However, even when the non-degeneracy condition is violated, there are simple conditions on the sequence of initial data that ensure that \eqref{eq:limu} is an equality. In order to state those, let us consider a cut-off function $\chi\in{\mathcal C} _0^\infty (\R^d)$ such that 
\begin{equation}\label{def:chi}
0\leq \chi\leq 1, \;\;\chi(\eta)=1\;\;{\rm  for}\;\; \|\eta\|\leq 1 \;\;{\rm and}\;\;  \chi(\eta)=0\;\;{\rm  for}\;\; \|\eta\|\geq 2.
\end{equation} 

\begin{theorem}\label{theo:data}
Assume that the same hypotheses as in Theorem \ref{theo:disc} hold, and that the following additional condition on the sequence $\left(u^\eps_0\right)_{\eps > 0}$ of initial data is satisfied: for all $\xi\in\Lambda$,
$$\limsup_{\delta\rightarrow 0^+}\, \limsup_{R\rightarrow +\infty}\, \limsup_{\eps\rightarrow 0^+} \left\|\left(1-\chi\right)\left({\eps D_x-\xi\over \eps R}\right)\chi\left({\eps D_x-\xi\over \delta}\right)u^\eps_0\right\|_{L^2(\R^d)}=0.$$
Then, the inequality in formula~(\ref{eq:limu}) becomes an equality.
\end{theorem}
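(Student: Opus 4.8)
The plan is to trace through the proof of Theorem \ref{theo:disc} and identify exactly where the inequality in \eqref{eq:limu} might become strict, then show the extra hypothesis closes that gap. The natural tool is the framework of semiclassical (Wigner) measures, possibly combined with two-microlocal measures near each critical point $\xi\in\Lambda$. Recall the mechanism: the part of the mass of $(u^\eps_0)$ that oscillates with frequency exactly $\xi/\eps$ — captured by the weak limit $u^0_\xi$ of $(\mathrm e^{-\frac i\eps\xi\cdot x}u^\eps_0)$ — evolves according to the effective Schrödinger equation \eqref{eq:schrohprofil}, because on that frequency packet $\lambda(\eps D_x)/\eps^2$ is well-approximated by its quadratic Taylor expansion $\frac12\nabla^2\lambda(\xi)D_x\cdot D_x$. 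The inequality, rather than equality, arises because mass can sit at frequency $\xi/\eps$ in a weaker sense than strong $L^2$-convergence of $\mathrm e^{-\frac i\eps\xi\cdot x}u^\eps_0$: there may be a ``second microlocal'' component, concentrated on frequencies $\xi/\eps + o(1/\eps)$ but not $\xi/\eps + O(1)$, i.e. at an intermediate scale $1\ll|\eps\xi-\xi|/\eps\ll 1$ wait — at scale between $\eps$ and $1$ away from $\xi$. On that intermediate piece the quadratic approximation of $\lambda$ is still not the right effective dynamics (the relevant operator is a degenerate/anisotropic one, or the profile simply disperses), so that mass need not show up in $\sum_\xi|u_\xi(t,x)|^2$, whence the strict inequality; Proposition \ref{prop:disp1} exhibits precisely such a degenerate example.

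Concretely, I would first decompose, for each $\xi\in\Lambda$ and parameters $\delta>0$, $R>0$,
$$u^\eps_0 = \chi\!\left(\tfrac{\eps D_x-\xi}{\eps R}\right)u^\eps_0 + \left[(1-\chi)\!\left(\tfrac{\eps D_x-\xi}{\eps R}\right)\chi\!\left(\tfrac{\eps D_x-\xi}{\delta}\right)\right]u^\eps_0 + (1-\chi)\!\left(\tfrac{\eps D_x-\xi}{\delta}\right)u^\eps_0,$$
the first term localising at scale $\eps R$ around $\xi$ (it converges, after removing the oscillation $\mathrm e^{\frac i\eps\xi\cdot x}$ and letting $R\to\infty$, to $u^0_\xi$ in $L^2$), the third term localising away from $\xi$ at scale $\delta$ (its contribution to the defect measure near $\xi$ vanishes as $\delta\to0$, by the analysis already in the proof of Theorem \ref{theo:disc} — this is the part that truly disperses), and the middle term being exactly the one controlled by the new hypothesis. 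The hypothesis says this middle piece has vanishing $L^2$-norm in the iterated limit $\eps\to0$, then $R\to\infty$, then $\delta\to0$, hence it carries no mass at all and \emph{a fortiori} contributes nothing to $\nu_t$. I would then propagate each piece by the evolution \eqref{eq:disp}: by $L^2$-conservation and the triangle inequality for the defect measures, the defect measure of $(u^\eps)$ near $\xi$ equals the defect measure of the evolved first piece, which by the quadratic-approximation argument of Theorem \ref{theo:disc} (the symbol $\lambda(\eps D_x)/\eps^2$ restricted to $\|\eps D_x-\xi\|\le \eps R$ converges to $\frac12\nabla^2\lambda(\xi)D_x\cdot D_x$ with error controlled uniformly once we let $R$ be fixed and then large) equals $|u_\xi(t,x)|^2\,dx$. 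Summing the lower bounds over $\xi\in\Lambda$ and matching with \eqref{eq:limu} gives the reverse inequality, hence equality.

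The main obstacle, I expect, is making the three-term decomposition genuinely \emph{orthogonal} at the level of defect measures: one needs that the cross terms between the near-$\xi$ piece, the far-from-$\xi$ piece, and (for distinct $\xi,\xi'\in\Lambda$) the pieces around different critical points, all vanish in the limit, so that the defect measures add. For distinct critical points this follows from their frequencies $\xi/\eps$, $\xi'/\eps$ separating at scale $1/\eps$ (a non-stationary phase / orthogonality argument that is presumably already used in Theorem \ref{theo:disc}); within a fixed $\xi$ one must check that $\chi(\tfrac{\eps D_x-\xi}{\eps R})$ and $1-\chi(\tfrac{\eps D_x-\xi}{\delta})$ have frequency supports separated at a scale $\gg\eps$, so their product is small and the corresponding Wigner transforms decouple — this requires taking the limits in the stated order ($\eps\to0$ first, then $R\to\infty$, then $\delta\to0$), which is precisely the order in the hypothesis and is the reason the statement is phrased with iterated $\limsup$'s. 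A secondary technical point is uniform-in-$\eps$ control of the evolution on the intermediate frequency annulus so that the middle term, once known to be $o(1)$ in $L^2$ at time zero, stays $o(1)$ for a.e.\ $t$ after applying the unitary (or nearly unitary, modulo the $\eps^2 V$ perturbation) propagator — but this is immediate from the $L^2$ bound and does not require any fine analysis.
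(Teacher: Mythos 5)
Your identification of the mechanism is the right one and coincides with the paper's: the extra hypothesis kills exactly the mass that concentrates on the frequency $\xi/\eps$ at a rate slower than $\eps$, which in the paper's two-microlocal language is the statement that the measure $\gamma_0$ attached to the initial data vanishes (compare your middle term with the characterization \eqref{eq:sgamma0} via the test functions \eqref{eq:testinfty}). The paper's proof is then very short: a conservation law for the total mass of $\gamma_t$, obtained by differentiating $J^\eps_{R,\delta}(t)=\bigl((1-\chi)\bigl(\tfrac{\eps D_x-\xi}{R\eps}\bigr)\chi\bigl(\tfrac{\eps D_x-\xi}{\delta}\bigr)u^\eps(t),u^\eps(t)\bigr)$ along the flow — the Fourier multiplier commutes with $\lambda(\eps D_x)$, so only the commutator with $V$ survives, and it is $O(\eps)+O(1/R)+O(\delta)$ — forces $\gamma_t=0$ for all $t$, and equality in \eqref{eq:limu} follows from \eqref{eq:mopsimple} and Theorem~\ref{theo:2micro}. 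Your route (a three-piece frequency decomposition of $u^\eps_0$, evolved piece by piece) is organized differently and is in principle viable; in particular, handling the middle piece by unitarity of the propagator is a clean substitute for the conservation lemma as far as that piece is concerned.

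There are, however, two genuine gaps. First, the cross term between the evolved near and far pieces. Orthogonality of the initial pieces (disjoint Fourier supports once $2\eps R<\delta$) is preserved by the unitary flow, but that does not make $\int\phi\, v^\eps_{\rm near}(t)\,\overline{v^\eps_{\rm far}(t)}\,dx$ vanish for a localized $\phi$: you need the two evolved pieces to \emph{remain} frequency-separated at a scale $\gg\eps$, and the full propagator does not commute with the cutoffs because of $V$. Closing this requires the commutator estimate $[\chi(\tfrac{\eps D_x-\xi}{\delta}),V]=O(\eps/\delta)$ together with a Duhamel argument over the time interval — i.e., precisely the computation of the paper's conservation lemma resurfaces here; the order of the iterated limits by itself does not dispose of it, contrary to what you suggest. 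Second, your parenthetical claim that the near piece, after removing the oscillation and letting $R\to\infty$, converges to $u^0_\xi$ \emph{strongly} in $L^2$ is false in general (take $\theta(x)+\theta(x-1/\eps)$ modulated by ${\rm e}^{\frac{i}{\eps}\xi\cdot x}$: the conjugated sequence converges only weakly to $\theta$). The identification of the local defect measure of the evolved near piece with $|u_\xi(t,\cdot)|^2dx$ therefore cannot rely on strong convergence; it requires the compactness argument from the proof of part (ii) of Theorem~\ref{theo:2micro} (weak convergence tested against the compact operator $\op_1(A^0_R)$). Neither issue is fatal, but both must be supplied before the sketch becomes a proof.
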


Note that for the sequence initial data~(\ref{eq:data0bis}) introduced previously, one has $u_\xi^0=0$ for $\xi\notin\{\xi_1,\xi_2\}$ and $u_{\xi_j}^0=\theta_j$, $j=1,2$. Identity \eqref{eq:ex} is a consequence of Theorem \ref{theo:disc} in that setting. 

Note also that $u_\xi$ may be identically equal to zero even if the family $(u^\eps_0)_{\eps > 0}$ oscillates in the direction~$\xi$. To see this, simply modulate the waves in example~(\ref{eq:data0}) by an amplitude that concentrates around some point $x_0\in\R^d$:
\begin{equation}\label{eq:cs}
u^\eps_0(x)=\frac{1}{\eps^{d/4}}\theta\left(\frac{x-x_0}{\sqrt{\eps}}\right){\rm e}^{\frac{i}{\eps}\xi_0 \cdot x}
\end{equation}
This corresponds to a coherent state centered at the point $(x_0,\xi_0)$ in phase-space. In this case $u_{\xi_0}=0$ for every $\xi\in\R^d$. Thus, Theorem~\ref{theo:disc} allows us to conclude that the corresponding solutions $\left(u^\eps\right)_{\eps > 0}$ converge to zero in $L^2_{\rm loc}(\R\times\R^d)$ as $\eps \rightarrow 0^+$. 

\medskip

To conclude this section, let us investigate what kind of behavior can be expected when $\lambda$ has degenerate critical points. Suppose $\xi_0\in\Lambda$ and that $\omega_0\in\R^d$ exists such that $\omega_0\in\ker \nabla^2\lambda(\xi_0)$ and $\|\omega_0\|=1$. Let us slightly modify the initial data (\ref{eq:cs}) for $x_0=0$ by introducing a phase shift: 
\begin{equation}\label{data}
u^\eps_0(x)=\frac{1}{\eps^{\alpha d/2}}\theta\left({x\over\eps^\alpha}\right){\rm e}^{{i\over \eps}x\cdot (\xi_0+\eps^\beta\omega_0)},
\end{equation}
where $\theta\in{\mathcal S}(\R^d)$, $\alpha\in [0,1)$ and  $\beta\in (0,1)$ satisfies $\alpha+\beta<1$.
A simple computation shows that these data do not satisfy the assumptions of Theorem~\ref{theo:data}. Again, one has $u_{\xi}^0=0$ for any $\xi\in\R^d$; therefore, if $\xi_0$ were a non-degenerate critical point, Theorem \ref{theo:disc} would imply $\nu_t(dx)=0$, this means that the energy of the modified coherent state \eqref{data} would be dispersed to infinity. 
However, when $\xi_0\in\Lambda$ is degenerate this is no longer the case.
\begin{proposition}\label{prop:disp1}
Assume  $\omega_0\in\ker \nabla^2\lambda(\xi_0)$, $|\omega_0|=1$, $\beta>{2\over 3}$ and $V=0$. Let $(u^\eps)_{\eps_0}$ denote the sequence of solutions to \eqref{eq:disp} issued from the initial data \eqref{data}. 
Then, for every $a<b$ and every $\phi\in{\mathcal C}_0(\R^d)$ the following holds.
\begin{itemize}
\item[(i) ] If $\alpha=0$, then  
$$\lim_{\eps\to 0}\int_a^b\int_{\R^d}\phi(x)\left|u^{\eps}(t,x)\right|^2dxdt=\int_a^b\int_{\R^d}\phi(x) \left|{\rm e}^{-\frac{it}{2} \nabla^2\lambda(\xi_0)D_x\cdot D_x}\theta (x)\right|^2dxdt.$$
\item[(ii) ]  If $\alpha\neq 0$, then 
$$\lim_{\eps\to 0}\int_a^b\int_{\R^d}\phi(x)|u^{\eps}(t,x)|^2dxdt = (b-a) \, \phi(0) \, \|\theta\|_{L^2(\R^d)}^2.$$
\end{itemize}
\end{proposition}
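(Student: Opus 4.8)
The plan is to analyze the solution to \eqref{eq:disp} with $V=0$ and initial data \eqref{data} by an explicit computation, exploiting the fact that $\lambda(\eps D_x)$ is a Fourier multiplier so the evolution is diagonal in frequency. First I would write $u^\eps(t,x) = \mathrm e^{-\frac{it}{\eps^2}\lambda(\eps D_x)} u^\eps_0(x)$ as an oscillatory integral. Since the initial datum is concentrated in frequency near $\xi_0 + \eps^\beta \omega_0$ at scale $\eps^{1-\alpha}$ (the Fourier transform of $\eps^{-\alpha d/2}\theta(x/\eps^\alpha)$ rescales accordingly), the relevant values of $\eps\xi$ lie in a shrinking neighbourhood of $\xi_0$. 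I would Taylor-expand the phase $\frac{t}{\eps^2}\lambda(\eps\xi)$ around $\xi_0$: using $\nabla\lambda(\xi_0)=0$, the quadratic term gives the Schrödinger propagator $\mathrm e^{-\frac{it}{2}\nabla^2\lambda(\xi_0)D_x\cdot D_x}$ acting on the rescaled profile, while the linear-in-$\eps^\beta\omega_0$ terms and higher-order terms need to be shown negligible or to produce only a harmless translation. The key cancellation is that $\nabla^2\lambda(\xi_0)\omega_0 = 0$, so the cross term between $\eps^\beta\omega_0$ and the quadratic form vanishes and the $\omega_0$-direction contributes at order $\eps^{3\beta}/\eps^2$, which is where the condition $\beta > 2/3$ enters to kill that contribution in the limit.

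For part (i), $\alpha=0$: here the profile $\theta$ is not rescaled, the frequency localization is at the fixed scale of $\widehat\theta$ around $\xi_0$, and after the change of variables $\xi = \xi_0 + \eps^{-1}\cdot(\text{something})$... more precisely one writes $\eps\xi$ near $\xi_0$ by substituting $\xi = \xi_0/\eps + \eta$ with $\eta$ in the support of $\widehat\theta$ shifted by $\eps^{\beta-1}\omega_0$. The phase becomes $\frac{t}{\eps^2}\lambda(\xi_0 + \eps\eta + \eps^\beta\omega_0)$; expanding, the constant term is an overall phase (irrelevant for $|u^\eps|^2$), the terms linear in $\eta$ vanish since $\nabla\lambda(\xi_0)=0$, the term $\frac{t}{\eps^2}\cdot\frac12\nabla^2\lambda(\xi_0)(\eps\eta+\eps^\beta\omega_0)\cdot(\eps\eta+\eps^\beta\omega_0)$ reduces — because $\nabla^2\lambda(\xi_0)\omega_0=0$ — to $\frac{t}{2}\nabla^2\lambda(\xi_0)\eta\cdot\eta$, and the cubic and higher remainder is $O(\eps^{3\beta-2}+\eps)$ which tends to $0$ precisely when $\beta>2/3$. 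This yields convergence of $u^\eps(t,\cdot)$, up to a phase, to $\mathrm e^{-\frac{it}{2}\nabla^2\lambda(\xi_0)D_x\cdot D_x}\theta$ in $L^2$, locally uniformly in $t$, hence the stated limit of the densities after integrating against $\phi$ on $[a,b]$, using dominated convergence and the $L^2$-boundedness of the propagator.

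For part (ii), $\alpha\neq 0$: the profile is a concentrating bump $\eps^{-\alpha d/2}\theta(x/\eps^\alpha)$, whose semiclassical wavelength in the $x$-variable is $\eps^\alpha \ll 1$ while the phase oscillates at wavelength $\eps$; equivalently in frequency the datum occupies a ball of radius $\sim \eps^{1-\alpha}/\eps = \eps^{-\alpha}$... I would instead argue that the group velocity spreading forces the solution to concentrate at a single point. After the quadratic approximation the solution is, up to phase, $\mathrm e^{-\frac{it}{2\eps^{2-2\alpha}}\nabla^2\lambda(\xi_0)D_y\cdot D_y}\theta$ evaluated in the rescaled variable $y = x/\eps^\alpha$ — but the effective time $t/\eps^{2-2\alpha}\to\infty$, so the dispersive decay of the free Schrödinger flow on the degenerate-free part of $\nabla^2\lambda(\xi_0)$... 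The cleaner route is: the Wigner/defect measure of \eqref{data} is the Dirac mass $\delta(x)\otim\ldots$ — actually it is $\|\theta\|^2_{L^2}\delta_0(dx)$ in position because the datum concentrates at $x=0$, and this position concentration is propagated to all times by a compactness argument, since the solution stays concentrated near the classical trajectory issued from $(0,\xi_0)$ which, $\nabla\lambda(\xi_0)=0$ meaning zero group velocity, stays at $x=0$; one must then show the $\eps$-scale spreading $\eps^\alpha$ (and the $\eps^\beta$-correction) is too slow to escape any fixed ball over a bounded time interval. Concretely, the variance of $|u^\eps(t,\cdot)|^2$ is $O(\eps^{2\alpha} + (\text{dispersion}))$ which is $o(1)$, forcing weak-$\star$ convergence of $|u^\eps(t,\cdot)|^2\,dx$ to $\|\theta\|^2_{L^2}\delta_0$ for each $t$, then integrate in $t$.

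The main obstacle I expect is controlling the higher-order terms in the phase expansion uniformly — in part (ii) especially, where the frequency localization scale $\eps^{1-\alpha}$ around $\xi_0$ is much wider than in the $\alpha=0$ case, so the Taylor remainder of $\lambda$ at $\xi_0$ must be handled over a larger set, and one must verify that the combination of exponents $\alpha \in [0,1)$, $\beta\in(2/3,1)$, $\alpha+\beta<1$ indeed makes all error terms $\big(\tfrac{t}{\eps^2}\big)\times O\big((\eps^{1-\alpha}+\eps^\beta)^3\big) = O\big(\eps^{1-3\alpha} + \eps^{3\beta-2}\big)$... — wait, this needs $1-3\alpha>0$ too, which is false for $\alpha$ close to $1$; so for part (ii) the quadratic approximation cannot be pushed naively and one genuinely needs the concentration/variance argument rather than an $L^2$ convergence of the profile. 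Sorting out exactly which argument applies in which regime of $\alpha$, and getting the bookkeeping of exponents right, is the delicate part; the rest is the stationary phase / dominated convergence routine already used implicitly in the introduction's examples.
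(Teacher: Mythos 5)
Your treatment of part (i) is sound and is essentially the paper's own computation in a different packaging: the paper works with the two-microlocal quantity $\bigl(\op_\eps(a(x,\xi,\tfrac{\xi-\xi_0}{\eps}))u^\eps(t),u^\eps(t)\bigr)$ and performs the same Taylor expansion of the phase, using $\nabla\lambda(\xi_0)=0$, the cancellation $\nabla^2\lambda(\xi_0)\omega_0=0$, and $3\beta>2$ to discard the cubic remainder; you do it directly on $u^\eps(t,\cdot)$ and conclude by Plancherel and dominated convergence, which is fine. Also, your worry about needing $1-3\alpha>0$ is unfounded: the standing hypotheses $\alpha+\beta<1$ and $\beta>2/3$ force $\alpha<1/3$, and more directly $\eps^{1-\alpha}\ll\eps^{\beta}$ on the relevant frequency set, so the whole Taylor remainder is $O(\eps^{3\beta})$ and $\eps^{-2}O(\eps^{3\beta})\to 0$. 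The quadratic approximation is therefore legitimate for every admissible $\alpha$, in part (ii) as well as in part (i).

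The genuine gap is in part (ii), where you assert but do not prove the key concentration claim. After the (valid) quadratic approximation one has, up to unimodular factors, $u^\eps(t,x)\approx\eps^{-\alpha d/2}\bigl[{\rm e}^{-\frac{it}{2\eps^{2\alpha}}\nabla^2\lambda(\xi_0)D_y\cdot D_y}\theta\bigr](x/\eps^\alpha)$ (note the effective time is $t\eps^{-2\alpha}$, not $t\eps^{-(2-2\alpha)}$). Your claim that ``the variance of $|u^\eps(t,\cdot)|^2$ is $O(\eps^{2\alpha}+(\text{dispersion}))=o(1)$'' is exactly what needs proof, and your own parenthetical remark about the dispersive decay of the free flow on the range of $\nabla^2\lambda(\xi_0)$ shows you have not resolved it: at effective time $s=t\eps^{-2\alpha}$ the profile spreads over $|y|\sim s$ in the directions where the Hessian is non-degenerate, i.e.\ over $|x|\sim t\eps^{-\alpha}\to\infty$ in the original variables, which pushes mass \emph{out} of every fixed compact set rather than concentrating it at $x=0$. ``Zero group velocity at $\xi_0$'' does not settle this, because the data occupy a frequency ball of radius $\eps^{-\alpha}$ around $\xi_0/\eps$ on which the group velocity is not uniformly $o(\eps^{\alpha})$ in the non-kernel directions. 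The paper's proof obtains (ii) by an explicit stationary-phase computation on the two-microlocal functional $L^\eps$ after the change of variables $x=\eps^\alpha X$, etc.: the Dirac mass $\delta(\zeta'-\eta')$ produced by the free integration in $X$ (once the test symbol has been replaced by its value at $x=0$) is what kills the large phase $\frac{t}{\eps^{2\alpha}}(\nabla^2\lambda(\xi_0)\eta'\cdot\eta'-\nabla^2\lambda(\xi_0)\zeta'\cdot\zeta')$ and yields $\|\theta\|_{L^2}^2\,\delta_0(dx)$. Your proposal must either reproduce a computation of this kind or supply a genuine concentration estimate; the tension you noticed between dispersion in the non-kernel directions and concentration at the origin is the crux of part (ii), and as written your argument does not close it.
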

This example also shows that defect measures can be singular when critical points of the symbol are degenerate. In the example above we have:
$$\nu_t(dx)= \|\theta\|_{L^2(\R^d)}^2\delta_0(dx).$$
Of course, this can never occur if $\Lambda$ consists only of non-degenerate critical points, as Theorem \eqref{theo:disc} shows. The proofs of the results in this section are given in Section \ref{sec:two_microlocal}.

\subsection{Non dispersive effects associated to a manifold of critical points}

A natural generalization of the results of the previous section consists in assuming that the set of critical points is a smooth submanifold of $\R^d$. This situation has been examined in detail in \cite{CFM}. Here, in order to keep the presentation reasonably self-contained, we describe the results in the geometrically simpler case in which $\Lambda$ is an affine variety of codimension $0<p\leq d$. After performing a linear change of coordinates in momentum space, we can assume that $\Lambda$ takes the following form.
\begin{enumerate}
\item [\textbf{A2'}] The set $\Lambda$ of critical points of $\lambda$ is of the form: $$\Lambda = \{(\xi=(\xi',\xi'')\in\R^{r}\times\R^p\,:\, \xi''=\xi''_0\},$$
for some $\xi_0''\in\R^p$. Above we have $0<p\leq d$ and $r:=d-p$.
\end{enumerate}
Before stating the main result in this case, we must introduce some notations. We decompose the physical space as $x=(x',x'')\in\R^r\times\R^p$. Given a function $\phi\in L^\infty(\R^d)$, we write $m_\phi(x')$, where $(x')\in\R^{r}$, to denote the operator acting on $L^2(\R^p)$ by multiplication by $\phi(x',\cdot)$:
\begin{equation}\label{eq:op_mult}
m_\phi(x') f(y) = \phi(x',y)f(y), \quad {\rm for} \; f \in L^2(\R^p).
\end{equation}
Note that assumption {\bf A2'} implies that for any $\xi\in\Lambda$ the non-trivial part of the Hessian of $\lambda$ at $\xi$ defines a differential operator $\nabla^2_{\xi''}\lambda(\xi)D_y\cdot D_y$ acting on function defined on $\R^p$ . 

In our next result, the sum over critical points appearing in the statement of Theorem \ref{theo:disc} is replaced by an integral with respect to a certain measure over $\R^r\times\Lambda$, and the Schr\"odinger equation (\ref{eq:schrohprofil}) becomes a Heisenberg equation for a time-dependent family $M$ of trace-class operators acting on $L^2(\R^p)$. More precisely, the operators $M$ depend on $t\in\R$ and on $(x',\xi')\in\R^r\times\R^r$; for every choice of these parameters, $M_t(x',\xi')$ is an element of $\mathcal{L}^1_+\left(L^2(\R^p)\right)$, \emph{i.e.}, it is a positive, Hermitian, trace-class operator acting on  $L^2(\R^p)$. 

\begin{theorem}\label{theo:nondis}
Assume that the sequence of initial data $\left(u^\eps_0\right)_{\eps > 0}$ verifies {\bf A0} and that $\lambda$ and $V$ satisfy {\bf A1} and {\bf A2'}; denote by~$\left(u^\eps\right)_{\eps > 0}$ the corresponding family of solutions to (\ref{eq:disp}). Suppose $(\eps_n)_{n\in\N}$ is a subsequence along which $(|u^{\eps_n}|^2)_{n\in\N}$ converges, in the sence of \eqref{eq:density}, to some defect measure $\nu_t(dx)dt$. Then there exist a positive Radon measure $\nu^0$ defined on $\R^r\times\R^r$ and a measurable family of self-adjoint, positive, trace-class operators 
$$M_0:\R^r\times\R^r \ni (x',\xi')\longmapsto M_0(x',\xi')\in \mathcal{L}_+^1(L^2(\R^r)),\quad {\rm Tr}_{L^2(\R^p)} M_0(x',\xi')=1,$$ 
such that, for almost every $t\in\R$ and every $\phi\in\mathcal{C}_0(\R^d)$ the following holds:
\begin{equation}\label{eq:limubis}
\int_{\R^d}\phi(x)\nu_t(dx)\geq \int_{\R^r\times\R^r}{\rm Tr}_{L^2(\R^p)}\left[m_\phi(x',\xi')M_t(x',\xi')\right]\nu^0(dx',d\xi'),
\end{equation}
and $M \in \mathcal{C}(\R,\mathcal{L}_+^1(L^2(\R^p))$ solves the following Heisenberg equation:
\begin{equation}\label{eq:heis1}
\left\{ \begin{array}{l}
i\partial_t M_t(x',\xi') =\left[\frac{1}{2} \nabla^2_{\xi''}\lambda(\xi',\xi''_0)D_y\cdot D_y + m_{V}(x'), M_t(x',\xi')\right] ,\vspace{0.2cm}\\
M |_{t=0}=M_0.
\end{array} \right.
\end{equation}
Moreover, the measure $\nu^0$ and the family of operators $M_0$ are computed in terms of the sequence initial data $(u^{\eps}_0)_{\eps>0}$. In particular, they do not depend on $\lambda$ or $V$.
\end{theorem}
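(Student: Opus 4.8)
The plan is to prove Theorem~\ref{theo:nondis} by combining a two-microlocal semiclassical analysis with the propagation of the associated operator-valued measures, following the strategy of \cite{CFM} but exploiting the simpler affine geometry of \textbf{A2'}. First I would introduce the relevant phase-space variable. Writing $x=(x',x'')$, $\xi=(\xi',\xi'')$ and localizing near $\Lambda=\{\xi''=\xi_0''\}$, the natural object is the \emph{two-scale} (semiclassical--second-microlocal) Wigner distribution of $(u^\eps(t,\cdot))$ along $\Lambda$: one tests against symbols $a(x',\xi',\eta)$ depending on the slow variables $(x',\xi')\in\R^r\times\R^r$ and on the blown-up fast variable $\eta=(\xi''-\xi_0'')/\eps$ conjugate to $y=x''$. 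The compactness theory for such objects (Gérard--Leichtnam, Fermanian-Kammerer--Gérard, and \cite{CFM}) yields, along a subsequence, a measure $\nu^0(dx',d\xi')$ on $\R^r\times\R^r$ together with a measurable family $M_0(x',\xi')\in\mathcal{L}_+^1(L^2(\R^p))$ of trace-$1$ operators, extracted purely from $(u^\eps_0)$ — this gives the last sentence of the statement. The $\eps$-oscillation hypothesis \textbf{A0} guarantees no mass escapes in the fast directions, so this two-microlocal measure captures exactly the part of $|u^\eps_0|^2$ that concentrates on frequencies within $O(\eps)$ of $\Lambda$.

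Next I would propagate this object in time. The equation \eqref{eq:disp} reads $i\eps^2\partial_t u^\eps = \lambda(\eps D_x)u^\eps + \eps^2 V u^\eps$. Near $\Lambda$ we Taylor-expand $\lambda(\eps\xi) = \lambda(\eps\xi',\xi_0''+\eps\eta) = \lambda(\eps\xi',\xi_0'') + \tfrac{\eps^2}{2}\nabla^2_{\xi''}\lambda(\eps\xi',\xi_0'')\eta\cdot\eta + O(\eps^3\langle\eta\rangle^3)$, using that the first $\xi''$-derivative vanishes on $\Lambda$. The leading $\lambda(\eps\xi',\xi_0'')$ term is, after division by $\eps^2$, of size $O(\eps^{-2})$ but depends only on $x'$-frequencies; it generates a fast unitary rotation which, at the level of the Wigner transform, is killed by the $x'$-averaging built into the two-scale measure (this is the ``averaging'' mechanism familiar from the torus results cited in the introduction — the slow part of the symbol does not contribute to the limiting transport because the measure is constant along the corresponding flow, or equivalently one conjugates it away). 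What survives, at order $\eps^0$ in the rescaled time, is precisely the operator $\tfrac12\nabla^2_{\xi''}\lambda(\xi',\xi_0'')D_y\cdot D_y + m_V(x')$ acting on $L^2_y(\R^p)$, and a standard Egorov/commutator computation on the two-microlocal symbol class shows that $M_t(x',\xi')$ solves the Heisenberg equation \eqref{eq:heis1} with datum $M_0$. Continuity $M\in\mathcal{C}(\R,\mathcal{L}_+^1(L^2(\R^p)))$ and preservation of positivity and of the trace follow because the generator is (essentially) self-adjoint and bounded-below perturbed, so the evolution is by unitary conjugation.

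Finally I would pass from the operator-valued measure back to the scalar defect measure $\nu_t$. For $\phi\in\mathcal{C}_0(\R^d)$, applying the definition of the two-microlocal measure to the multiplication symbol $\phi(x)$ — which in the blown-up coordinates becomes $m_\phi(x',\xi')$ acting on $L^2_y$ via $y\mapsto\phi(x',y)$ — and using that $|u^\eps(t,x)|^2$ tested against $\phi$ decomposes as the two-microlocal contribution near $\Lambda$ plus a nonnegative remainder carrying the mass that sits away from $\Lambda$, gives
$$\int_{\R^d}\phi(x)\,\nu_t(dx)\ \ge\ \int_{\R^r\times\R^r}{\rm Tr}_{L^2(\R^p)}\bigl[m_\phi(x',\xi')M_t(x',\xi')\bigr]\,\nu^0(dx',d\xi'),$$
which is \eqref{eq:limubis}; the inequality (rather than equality) is exactly because the far-from-$\Lambda$ part of the energy is discarded. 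The main obstacle, I expect, is the rigorous justification of the averaging step: one must show that the $O(\eps^{-2})$ term $\eps^{-2}\lambda(\eps D_{x'},\xi_0'')$ genuinely decouples from the slow dynamics in the limit, which requires a careful second-microlocal normal-form argument (conjugating $u^\eps$ by $\exp(-it\eps^{-2}\lambda(\eps D_{x'},\xi_0''))$ and checking this conjugation acts trivially on the limiting two-scale measure while commuting, up to negligible errors, with the localization near $\Lambda$). The secondary technical point is controlling the cubic and higher Taylor remainders $O(\eps^3\langle\eta\rangle^3)$ uniformly, which is where hypothesis \textbf{A1} on the symbol order and the $\eps$-oscillation bound \textbf{A0} are used together to confine $\eta$ to a compact set in the limit.
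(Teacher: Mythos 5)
Your proposal follows essentially the same route as the paper: the two-microlocal Wigner distributions of Section~\ref{sec:31} (symbols in $(x,\xi,\eta)$ with $\eta=(\xi''-\xi_0'')/\eps$), the decomposition of their limits into a part at infinity $\gamma_t$ plus an operator-valued part $\mathtt{M}_t=M_t\,\nu_t$ with ${\rm Tr}\,M_t=1$, propagation of $M_t$ by Taylor-expanding $\lambda$ at $\Lambda$ to second order, and the inequality by discarding nonnegative contributions. The paper itself only sketches this case, deferring to the proof of Theorem~\ref{theo:2micro} and to \cite{CFM}, and your outline is consistent with that scheme.

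Two clarifications are worth recording. First, the step you single out as the main obstacle --- averaging away the $O(\eps^{-2})$ term $\eps^{-2}\lambda(\eps D_{x'},\xi_0'')$ by a second-microlocal normal form --- is vacuous in this setting: since every point of $\Lambda=\{\xi''=\xi_0''\}$ is a critical point of $\lambda$, one has $\nabla_{\xi'}\lambda(\cdot\,,\xi_0'')\equiv 0$ on the connected affine set $\Lambda$, so $\lambda$ is \emph{constant} there and the conjugation you propose reduces to multiplication by the scalar phase ${\rm e}^{it\lambda(\xi_0)/\eps^2}$, exactly as in the proof of Theorem~\ref{theo:2micro}, part (ii); no torus-style averaging is needed. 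For the same reason the blocks $\partial^2_{\xi'}\lambda$ and $\partial_{\xi'}\partial_{\xi''}\lambda$ of the Hessian vanish on $\Lambda$, which is what suppresses any transport in $(x',\xi')$ and makes $\nu^0$ time-independent. Second, the slack in \eqref{eq:limubis} is not ``the far-from-$\Lambda$ part of the energy'': by the localization property \eqref{prop:loc} the measure $\mu_t$ carries no mass off $\R^d\times\Lambda$. The discarded nonnegative term is $\gamma_t$, i.e.\ the mass concentrating on $\Lambda$ at a rate strictly slower than $\eps$ (captured by the cut-off $1-\chi(\eta/R)$), which is precisely the term shown to vanish under the maximal-rank hypothesis of Theorem~\ref{theo:simple}. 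Neither point invalidates your argument, but the second one matters if you want to understand when equality holds.
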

The nature of the objects involved in this result is described in Section \ref{sec:31}.
As before, a certain non-degeneracy condition on the points of $\Lambda$ implies that the inequality \eqref{eq:limubis} is in fact an identity.

\begin{theorem}\label{theo:simple}
Suppose all the hypotheses of Theorem \ref{theo:nondis} are satisfied. If in addition to those, for every $\xi\in\Lambda$ the rank of the Hessian $\nabla^2 \lambda(\xi)$ is equal to $p$ then \eqref{eq:limubis} is an identity. 
\end{theorem}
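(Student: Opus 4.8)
The plan is to start from the proof of Theorem~\ref{theo:nondis} and to show that, under the additional rank assumption, the non-negative term discarded there to produce the inequality in~\eqref{eq:limubis} actually vanishes in the limit $\eps\to0^+$. Under~\textbf{A2'}, for $\xi\in\Lambda$ the Hessian $\nabla^2\lambda(\xi)$ is block-diagonal, its only possibly nonzero block being $\nabla^2_{\xi''}\lambda(\xi',\xi''_0)$; consequently the hypothesis ``$\mathrm{rank}\,\nabla^2\lambda(\xi)=p$ for every $\xi\in\Lambda$'' is equivalent to the invertibility of $\nabla^2_{\xi''}\lambda(\xi',\xi''_0)$ for every $\xi'\in\R^r$ --- i.e.\ to the transverse non-degeneracy of $\Lambda$ --- and it is this property that will be used.

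The proof of Theorem~\ref{theo:nondis} decomposes any weak-$\star$ accumulation point of $|u^\eps(t,\cdot)|^2\,dx$, tested against $\phi\in\mathcal{C}_0(\R^d)$ and averaged in $t$ over a compact interval, into three non-negative contributions, according to the way the energy of $(u^\eps_0)$ distributes in the frequency variable: (i) the part living, at scale $\eps$, at a positive distance from $\Lambda$; (ii) the part second-microlocalized on $\Lambda$ at scale exactly $\eps$, which produces the right-hand side of~\eqref{eq:limubis} together with the Heisenberg equation~\eqref{eq:heis1}; and (iii) the part second-microlocalized on $\Lambda$ at intermediate scales $\delta_\eps$ with $\eps\ll\delta_\eps\ll1$. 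Contribution~(i) is already shown, in the proof of Theorem~\ref{theo:nondis} and without any non-degeneracy assumption, to be zero for almost every $t$: the group velocity of the propagator of~\eqref{eq:disp} is then of order $1/\eps$, so the corresponding mass leaves every compact subset of $\R^d$ for $t$ bounded away from $0$. It therefore suffices to prove that, under the rank hypothesis, contribution~(iii) vanishes as well. (Incidentally, the iterated-limit hypothesis of Theorem~\ref{theo:data} is exactly the requirement that contribution~(iii) be absent already in the data; here we allow it and show that it disperses.)

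To handle~(iii), I would isolate it by a second-microlocalization along $\Lambda$, as in~\cite{CFM} and Section~\ref{sec:31}, writing the relevant frequencies in the form $\eps\xi''=\xi''_0+\delta_\eps\,\sigma$ with $\sigma$ of order $1$, $\delta_\eps\to0$ and $\eps/\delta_\eps\to0$. Inserting this into the phase of the propagator and using $\nabla_{\xi''}\lambda(\xi',\xi''_0)=0$, one finds that, up to contributions which do not affect the density in the limit --- a phase independent of the transverse variable, into which the potential $V$ is absorbed, and the higher-order Taylor remainders of $\lambda$, which are negligible at this scale precisely because the non-degenerate quadratic term dominates --- the corresponding piece of $u^\eps(t,\cdot)$ is governed by the free model evolution $i\partial_s w=\frac{1}{2}\nabla^2_{\xi''}\lambda(\xi',\xi''_0)D_y\cdot D_y\,w$ on $L^2(\R^p_y)$, with an effective time $s=s(\eps,t)$ of order $t\,\delta_\eps^2/\eps^2$. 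The crucial point is that $\eps\ll\delta_\eps$ forces $|s(\eps,t)|\to+\infty$ for every $t\neq0$.

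Finally, since $\nabla^2_{\xi''}\lambda(\xi',\xi''_0)$ is invertible, the model flow is genuinely dispersive: $\|{\rm e}^{-\frac{is}{2}\nabla^2_{\xi''}\lambda(\xi',\xi''_0)D_y\cdot D_y}\|_{L^1(\R^p)\to L^\infty(\R^p)}\le C\,|s|^{-p/2}$. Hence the contribution of~(iii) to $\int_a^b\int_{\R^d}\phi(x)|u^\eps(t,x)|^2\,dx\,dt$ is bounded, up to $\|\phi\|_{L^\infty}$ and the conserved mass $\|u^\eps_0\|^2_{L^2(\R^d)}$, by $\int_a^b|s(\eps,t)|^{-p/2}\,dt$, which tends to $0$ as $\eps\to0^+$ by dominated convergence (the integrand is integrable, since $p\ge1$, and is uniformly dominated thanks to conservation of the $L^2$ norm). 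Thus contribution~(iii) is zero, only the term in~\eqref{eq:limubis} survives, and the inequality becomes an identity. The step I expect to be the main obstacle is the rigorous reduction to the model evolution above: one must re-run the two-microlocal propagation argument behind Theorem~\ref{theo:nondis} while keeping track of the diverging effective time $s(\eps,t)$, uniformly over the whole range of admissible intermediate scales $\delta_\eps$ and over the parameter $\xi'$, and control the error terms (the Taylor remainder of $\lambda$, the potential) in a topology strong enough to survive the scale-separation limits $\delta\to0^+$ and $R\to+\infty$. When $\nabla^2_{\xi''}\lambda$ has a nontrivial kernel this argument fails in the kernel directions, where the dispersive decay is lost; this is precisely the mechanism responsible for the strict inequality exhibited in Proposition~\ref{prop:disp1}.
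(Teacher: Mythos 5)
Your decomposition into the three contributions is the right picture, and you correctly identify that the whole content of the theorem is to kill contribution (iii), i.e.\ the part of the mass concentrating on $\Lambda$ at scales intermediate between $\eps$ and $1$ --- in the notation of Section~\ref{sec:31}, the two-microlocal measure $\gamma_t$ living on $\R^d\times\R^r\times{\bf S}^{p-1}$. You also correctly identify that the rank hypothesis is exactly transverse non-degeneracy of $\nabla^2_{\xi''}\lambda(\xi',\xi''_0)$. However, the mechanism you propose for making (iii) vanish has a genuine gap. You claim that the contribution of (iii) to $\int_a^b\int\phi\,|u^\eps|^2$ is bounded, ``up to $\|\phi\|_{L^\infty}$ and the conserved mass $\|u^\eps_0\|^2_{L^2}$,'' by $\int_a^b|s(\eps,t)|^{-p/2}dt$ via the estimate $\|{\rm e}^{-\frac{is}{2}\nabla^2_{\xi''}\lambda\,D_y\cdot D_y}\|_{L^1\to L^\infty}\le C|s|^{-p/2}$. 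That inference is false: the $L^1\to L^\infty$ decay cannot be combined with an $L^2$ bound on the data, and the $L^2\to L^2$ norm of the model propagator is identically $1$, so conservation of mass gives boundedness of the local energy, not decay. The sequence $(u^\eps_0)$ is only assumed bounded in $L^2(\R^d)$; no uniform $L^1$ (or weighted) control on the intermediate-scale piece is available, and indeed for fixed $L^2$ data the escape of local mass as the effective time diverges is a qualitative, not quantitative, phenomenon --- there is no rate uniform over the unit ball of $L^2$, which is fatal here since both the data and the effective time depend on $\eps$.

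The paper's route avoids this entirely by working on the limiting object rather than before the limit. One first proves (as in Theorem~\ref{theo:2micro}(i), adapted to the submanifold setting) that for a.e.\ $t$ the measure $\gamma_t$ is \emph{invariant} under the flow $(x,\xi',\omega)\mapsto(x+s\,\nabla^2\lambda(\xi',\xi''_0)\omega,\ \xi',\ \omega)$ on $\R^d\times\R^r\times{\bf S}^{p-1}$; this uses only $\nabla\lambda(\xi)=\nabla^2\lambda(\xi_0)(\xi-\xi_0)+O(|\xi-\xi_0|^2)$ and the commutator identity with $\lambda(\eps D_x)$, with errors $O(\eps)+O(1/R)+O(\delta)$. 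Then Lemma~\ref{lem:classicdisp} is applied: a \emph{finite} positive measure invariant under a flow all of whose non-stationary orbits go to infinity linearly in $s$ must be supported on the stationary points. The rank-$p$ hypothesis ensures $\nabla^2_{\xi''}\lambda(\xi',\xi''_0)\omega\neq0$ for every $\omega\in{\bf S}^{p-1}$, so the flow has no stationary points and $\gamma_t\equiv0$. This is the rigorous counterpart of your ``effective time tends to infinity'' heuristic, but it trades the quantitative dispersive estimate (unavailable for $L^2$ data) for a soft measure-theoretic argument on the limit. If you want to repair your write-up, replace the $L^1\to L^\infty$ step by the invariance-plus-finiteness argument; the rest of your outline is consistent with the paper's strategy.
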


When $\Lambda=\{\xi_0\}$ consists of a single critical point, the statements of Theorems \ref{theo:disc} and \ref{theo:nondis} turn out to be completely equivalent. In this case, $r=0$, which forces $\nu^0(dx',d\xi')=\|u_{\xi_0}^0\|^2_{L^2(\R^d)}\delta_0(dx')\delta_0(d\xi')$. In addition, $p=d$, and the operator $M_t$ (which will not depend on $(x',\xi')$) will be the orthogonal projection onto $u_{\xi_0}(t,\cdot)$ in $L^2(\R^d)$. Since $u_{\xi_0}$ solves the Schrödinger equation~(\ref{eq:schrohprofil}), these orthogonal projections satisfy the Heisenberg equation (\ref{eq:heis1}). As it will be clear from the proofs, Theorem \ref{theo:nondis} generalises in a straightforward way to the case that $\Lambda$ is a disjoint union of affine varieties of $\R^d$.

\begin{remark}\label{rem:critical}
As soon as the dimension of $\Lambda$ is strictly positive, the measure $\nu^0$ may be singular with respect to the variable $x'$. This fact implies that the limiting measure of the sequence $\left(|u^{\eps}|^2\right)_{\eps>0}$ may be singular in the variable $x$. Indeed, assume for example $\Lambda=\{\xi''=0\}$, $p\not=0$, and 
$$u^\eps_0(x)=\eps^{\alpha(p-d)\over 2}\theta(x'')\varphi\left({x'-z_0\over \eps^\alpha}\right){\rm e}^{i{x'\cdot \zeta_0\over \eps}},$$
where $\alpha\in (0,1)$, $z_0,\zeta_0\in\R^r$, $\varphi\in{\mathcal C}_0^\infty(\R^{r})$, $\theta\in{\mathcal C}_0^\infty(\R^{p})$ and $\|\theta\|_{L^2(\R^p)}=1$.
Then the measure $\nu^0$ and the operator $M_0$ of Theorem~\ref{theo:nondis} will be: 

\begin{equation}\label{eq:critical}
\nu^0(dx',d\xi')= \| \varphi\|^2_{L^2(\R^{r})}\delta_{z_0}(dx')\delta_{\zeta_0}(d\xi') \qquad {\rm and} \qquad M_0(x',\xi')=|\theta\rangle \langle \theta |,
\end{equation}
see Corolary~\ref{cor:critical} in Section \ref{sec:submanifold}.
\end{remark}

\subsection{Link with smoothing-type estimates}
Since the pioneering works \cite{Ka83,Sjol:87,V88,CS88,KPV91,BAD91} it is well-known that dispersive-type equations develop some kind of smoothing effect. Usually, this is described by means of smoothing-type estimates. Theorems \ref{theo:disc} and \ref{theo:nondis} can be used, in a rather straightforward way, to describe obstructions to the validity of smoothing-type estimates in the presence of non-zero critical points of the symbol $\lambda$. Note that this type of behavior was already described in \cite{H03}; smoothing-type estimates outside the critical points of $\lambda$ were recently presented in \cite{RS16}). We present a simple application of Theorem \ref{theo:disc} to this setting.

\begin{corollary}\label{cor:smoothing}
Suppose {\bf A1}, {\bf A2}, hold and that $\lambda$ has a non-zero critical point $\xi_0$. Then, given any $\delta,s>0$ and any ball $B\subset\R^d$ it is not possible to find a constant $C>0$ such that the estimate
\begin{equation}\label{smoothingestimate}
\int _0^\delta \| |D_x|^s u^\eps(t,\cdot)\|_{L^2(B)}^2 dt \leq C \| u^\eps_0\|_{L^2(\R^d)}^2,
\end{equation}
holds uniformly for every solution $u^\eps$ of  of (\ref{eq:disp}) with initial datum $u^\eps_0\in \mathcal{C}^\infty_0(\R^d)$.
\end{corollary}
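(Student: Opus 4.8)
The plan is to argue by contradiction: assume such a constant $C>0$ exists for some fixed $\delta,s>0$ and some ball $B$, and then construct a sequence of initial data, concentrating in frequency at the critical point $\xi_0\neq 0$, for which the left-hand side of \eqref{smoothingestimate} blows up relative to the right-hand side. The natural candidates are the wave packets $u^\eps_{\xi_0}(x)=\theta(x)\mathrm{e}^{\frac{i}{\eps}\xi_0\cdot x}$ from \eqref{eq:data0}, with $\theta\in\mathcal{C}^\infty_0(\R^d)$, $\|\theta\|_{L^2}=1$ (one may approximate a Schwartz $\theta$ by a compactly supported one to meet the $\mathcal{C}^\infty_0$ requirement in the statement; this only costs an arbitrarily small error). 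These are $\eps$-oscillating, bounded in $L^2$, and by the remark following \eqref{eq:data0bis} their only nonzero profile is $u^0_{\xi_0}=\theta$, with all other $u^0_\xi=0$ for $\xi\in\Lambda\setminus\{\xi_0\}$.

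First I would compute the right-hand side: $\|u^\eps_0\|_{L^2(\R^d)}^2=1$, so the hypothetical estimate reads $\int_0^\delta \||D_x|^s u^\eps(t,\cdot)\|_{L^2(B)}^2\,dt\le C$ for all $\eps$. Next I would bound the left-hand side from below. Since $u^\eps_0$ oscillates at frequency $\xi_0/\eps$, one expects $|D_x|^s u^\eps(t,x)$ to be of size $\eps^{-s}$ times something nontrivial; more precisely, writing $|D_x|^s = \eps^{-s}|\eps D_x|^s$, one uses that $|\eps D_x|^s u^\eps(t,\cdot)$ behaves, microlocally near $\xi_0$, like $\|\xi_0\|^s$ times $u^\eps(t,\cdot)$ itself. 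The key point is then to apply Theorem \ref{theo:disc} to the sequence $\eps^{-s}|\eps D_x|^s u^\eps$, or equivalently to extract the defect measure of $(|u^\eps(t,\cdot)|^2)$ and use that it dominates $|u_{\xi_0}(t,x)|^2\,dx$, where $u_{\xi_0}$ solves \eqref{eq:schrohprofil} with datum $\theta$. Concretely: for any $\phi\in\mathcal{C}_c(\R^d)$ with $\phi=1$ on $B$ and $0\le\phi\le 1$, one has
$$\int_0^\delta \||D_x|^s u^\eps(t,\cdot)\|_{L^2(B)}^2\,dt \ge \eps^{-2s}\int_0^\delta\int_{\R^d}\phi(x)\,\big||\eps D_x|^s u^\eps(t,x)\big|^2\,dx\,dt,$$
and the space-time integral on the right converges, by (a variant of) Theorem \ref{theo:disc} applied to the $\eps$-oscillating sequence $|\eps D_x|^s u^\eps_0$ — whose profile at $\xi_0$ is $\|\xi_0\|^s\theta\neq 0$ since $\xi_0\neq 0$ — to $\|\xi_0\|^{2s}\int_0^\delta\int\phi(x)|u_{\xi_0}(t,x)|^2\,dx\,dt$, a strictly positive constant independent of $\eps$ (strict positivity because $u_{\xi_0}(0,\cdot)=\theta\not\equiv 0$ and the Schrödinger flow \eqref{eq:schrohprofil} is norm-preserving and cannot vanish on $B\times(0,\delta)$ unless $\theta\equiv 0$ — here one chooses $\theta$ and $B$ so that this positivity is manifest, e.g. $B$ a neighborhood of a point where $|u_{\xi_0}|^2$ is bounded below). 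Hence the left-hand side is $\gtrsim \eps^{-2s}$, which contradicts the uniform bound $C$ as $\eps\to 0^+$.

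The main obstacle is the commutation of the Fourier multiplier $|D_x|^s$ with the semiclassical dynamics and with the defect-measure extraction: $|D_x|^s$ is not a semiclassical operator with symbol localized near $\xi_0$, so one cannot directly say $|\eps D_x|^s u^\eps \approx \|\xi_0\|^s u^\eps$ globally. The fix is to insert a semiclassical frequency cutoff: write $|\eps D_x|^s u^\eps = |\eps D_x|^s \chi(\eps D_x - \xi_0) u^\eps + |\eps D_x|^s (1-\chi)(\eps D_x-\xi_0)u^\eps$ with $\chi$ as in \eqref{def:chi} supported near $0$; on the first term the symbol $|\xi|^s\chi(\xi-\xi_0)$ is smooth and equals $\|\xi_0\|^s$ plus $O(\text{support size})$, so semiclassical calculus gives the desired asymptotics, while the second term carries, in the limit, no mass on $B$ (this is precisely the content of the equality case / the structure of the defect measure being concentrated at critical points — or one simply needs a lower bound, so one may discard this term after checking it does not produce negative contributions, using that $\phi\ge 0$ and a Cauchy–Schwarz splitting is not needed if one instead works with the full $|u^\eps|^2$ and notes $|D_x|^s$ only amplifies). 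Carefully, the cleanest route is: the measure $\nu_t$ for the sequence $|\eps D_x|^s u^\eps_0$ (after rescaling by $\eps^s$) dominates $\||\xi_0|^s u_{\xi_0}(t,\cdot)|^2\,dx$ by Theorem \ref{theo:disc}, and this already suffices for the lower bound without needing the reverse inequality. Making this rescaling-and-cutoff argument rigorous — in particular verifying that $\eps^s|D_x|^s u^\eps_0$ is still $\eps$-oscillating and bounded in $L^2$, and identifying its profiles — is the technical heart; everything else is bookkeeping.
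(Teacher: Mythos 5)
Your strategy is sound and reaches the right conclusion, but it is genuinely different from --- and considerably heavier than --- the proof in the paper. The paper never quantifies the size of $\||D_x|^s u^\eps(t,\cdot)\|_{L^2(B)}$: it uses the hypothesized estimate only qualitatively, as a uniform bound for $(u^\eps)$ in $L^2((0,\delta);H^s(B))$, which by Rellich's theorem upgrades the weak convergence $u^\eps\rightharpoonup 0$ (forced by the oscillation ${\rm e}^{i\xi_0\cdot x/\eps}$ with $\xi_0\neq 0$) to strong convergence to $0$ in $L^2((0,\delta)\times B)$; the defect measure therefore vanishes on $B$, contradicting the lower bound $\nu_t\geq|u_{\xi_0}(t,\cdot)|^2dx$ of Theorem \ref{theo:disc} with $u^0_{\xi_0}=\theta\neq 0$. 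Your route instead extracts the quantitative rate: the left-hand side of \eqref{smoothingestimate} is of order $\eps^{-2s}$ for these data. That is a stronger statement, but it is precisely what forces you into the technical work you describe: $|D_x|^s$ is not a semiclassical operator adapted to the analysis, so you must cut off in frequency near $\xi_0$ (legitimate since $\xi_0\neq0$ keeps you away from the singularity of $|\xi|^s$) and control the commutator of the resulting multiplier with $V$ before any ``variant of Theorem \ref{theo:disc}'' applies to $|\eps D_x|^s u^\eps$. The paper's argument buys brevity by sidestepping all of this; yours buys the sharp rate at which \eqref{smoothingestimate} fails.

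Two points in your write-up need repair. First, the cutoff inequality is stated backwards: with $\phi=1$ on $B$ and $0\le\phi\le1$ you obtain an \emph{upper} bound for the $L^2(B)$-norm; for the lower bound you need $\phi$ supported inside $B$. Second, you cannot ``choose $B$'' --- the ball is given in the statement --- but you may choose $\theta$ supported in $B$; then $t\mapsto\|u_{\xi_0}(t,\cdot)\|_{L^2(B)}$ is continuous and equals $1$ at $t=0$, so $\int_0^\delta\int\phi\,|u_{\xi_0}(t,x)|^2dx\,dt>0$, which is the positivity you need (the paper's own proof implicitly relies on the same observation when it derives a contradiction from $u_{\xi_0}$ vanishing on $(0,\delta)\times B$). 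With these fixes, and with the commutator estimate for the frequency-localized multiplier against $V$ carried out, your argument closes.
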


\begin{proof}
We argue by contradiction. Suppose the estimate \eqref{smoothingestimate} holds for some $\delta,s,C>0$ and some ball $B$. Let $\theta\in \mathcal{C}^\infty_0(\R^d)$ with $\|\theta\|_{L^2(\R^d)}=1$ and consider the sequence of initial data:
$$u^\eps_0(x):=\theta(x){\rm e}^{i\frac{\xi_0}{\eps}\cdot x}.$$
Clearly $\|u^\eps_0\|_{L^2(\R^d)}=1$ and $(u^\eps_0)$ converges weakly to zero since $\xi_0\neq 0$. Estimate  \eqref{smoothingestimate} then implies that $(u^\eps)$ is bounded in $L^2((0,\delta);H^s(B))$ and Rellich's theorem gives that a subsequence of $(u^\eps)$ converges strongly in $L^2((0,\delta)\times B)$. This limit must be zero, since $(u^\eps)$ weakly converges to zero in that space.

\noindent Now, Theorem \ref{theo:disc} implies that:
$$0\geq |u_{\xi_0}(t,\cdot)|^2dx;$$
in particular, $u_{\xi_0}(t,\cdot)=0$ for every $t\in\R$. But this is a contradiction, since, as $u_{\xi_0}$ is the solution of the Schrödinger equation \eqref{eq:schrohprofil} with initial datum $u^0_{\xi_0}=\theta$, one necessarily has $\|u_{\xi_0}(t,\cdot)\|_{L^2(\R^d)}=1$.
\end{proof}

Of course, Theorem \ref{theo:nondis} gives an analogous consequence when the set of critical points is not isolated.

\medskip

\noindent\textbf{Acknowledgements.} F. Macià has been supported by grants StG-2777778 (U.E.) and MTM2013-41780-P, TRA2013-41096-P (MINECO, Spain). Part of this work was done while V. Chabu was visiting ETSI Navales at Universidad Politécnica de Madrid in the fall of 2015.

\section{The microlocal approach to the problem}\label{sec:wigner}
\subsection{Wigner measures}\label{sec:wigner_distributions}
Wigner distributions provide a useful way for computing weak-$\star$ accumulation points of a sequence of densities $\left(|u^\eps|^2\right)_{\eps>0}$ constructed from a $L^2$-bounded sequence $\left(u^\eps\right)_{\eps>0}$ of solutions to a semiclassical (pseudo) differential equation. They provide a joint position and momentum description of the $L^2$-mass distribution of functions.
The (momentum scaled) Wigner distribution of a function $f\in L^2(\R^d)$ is defined as:
$$W^\eps_f(x,\xi)=\int_{\R^d}f\left(x-\frac{\eps v}{2}\right)\overline{f\left(x+\frac{\eps v}{2}\right)}{\rm e}^{i\xi\cdot v}\frac{dv}{(2\pi)^d}.$$
It enjoys several interesting properties : 
\begin{itemize}
\item $W^\eps_f\in L^2(\R^d\times\R^d)$.
\item Projecting $W_f^\eps$ on $x$ or $\xi$  gives the position or momentum densities of $f$, respectively:
$$\int_{\R^d}W_f^\eps(x,\xi)d\xi=|f(x)|^2,\quad \int_{\R^d}W_f^\eps(x,\xi)dx=\frac{1}{(2\pi\eps)^d}\left|\widehat{f}\left(\frac{\xi}{\eps}\right)\right|^2.$$
Note that in spite of this, $W_f^\eps$ is not positive in general.
\item For every $a\in\mathcal{C}^\infty_0(\R^d\times\R^d)$ one has:
\begin{equation}\label{eq:wbdd}
\int_{\R^d\times\R^d}a(x,\xi)W_f^\eps(x,\xi)dx\,d\xi=(\op_\eps(a)f,f)_{L^2(\R^d)},
\end{equation}
where $\op_\eps(a)$ is the semiclassical pseudodifferential operator of symbol $a$ obtained through the Weyl quantization rule: 
$$\op_\eps(a) f(x)=\int_{\R^{d}\times\R^d} a\left(\frac{x+y}{2},\eps\xi\right) {\rm e}^{i \xi\cdot (x-y)} f(y)dy\,\frac{d\xi}{(2\pi)^d}.$$  
\end{itemize} 
See, for instance, \cite{FollandPhaseSpace} for proofs of these results.

\medskip

If $\left(f^\eps\right)_{\eps>0}$ is a bounded sequence in $L^2(\R^d)$, then $(W^\eps_{f^\eps})_{\eps>0}$ is a bounded sequence of tempered distributions in $\mathcal{S}'(\R^d\times\R^d)$. In addition, every accumulation point of $(W^\eps_{f^\eps})_{\eps>0}$ in $\mathcal{S}'(\R^d\times\R^d)$ is a positive distribution and, therefore, by Schwartz's theorem, an element of $\mathcal{M}_+(\R^d\times\R^d)$, the set of positive Radon measures on $\R^d\times\R^d$. These measures are called \textit{semiclassical} or \textit{Wigner measures}. See references \cite{Ge91,LionsPaul,GerLeich93,GMMP} for different proofs of the results we have presented in here. 

\medskip

Now, if $\mu\in\mathcal{M}_+(\R^d\times\R^d)$ is an accumulation point of $(W^\eps_{f^\eps})_{\eps>0}$ along some sequence $(\eps_k)_{k \in \N}$
and $\left(|f^{\eps_k}|^2\right)_{k \in \N}$ converges weakly-$\star$ towards a measure $\nu$ on $\R^d$, then one has:
$$\int_{\R^d}\mu(\cdot,d\xi)\leq \nu.$$
Equality holds if and only if $\left(f^{\eps_k}\right)_{k \in \N}$ is $\eps$-oscillating in the sense of {\bf A0}
(see \cite{Ge91,GerLeich93,GMMP}). Note also that this implies that $\mu$ is always a finite measure and its total mass is bounded by $\sup_\eps \left\|f^\eps\right\|^2_{L^2(\R^d)}$.

\medskip

This fact justifies the idea of replacing the analysis of energy densities by that of Wigner distributions, which allows one to use a larger set of test functions and to take into account in a more precise way the effects of oscillation of the studied functions, by considering the Fourier variable. 

\medskip 

When the sequence under consideration consists of solutions to the dispersive equation \eqref{eq:disp}, the convergence of the corresponding Wigner distributions towards a Wigner measure still holds provided one averages in time. More precisely, let $(u^{\eps})_{\eps>0}$ be a sequence of solutions to \eqref{eq:disp} issued from a sequence of initial data $(u^\eps_0)_{\eps>0}$ satisfying \textbf{A0}. Then there exist a subsequence $(\eps_k)$ tending to zero as $k\to\infty$ and a $t$-measurable family $\mu_t\in\mathcal{M}_+(\R^d\times\R^d)$ of finite measures, with total mass essentially uniformly bounded in $t\in\R$, such that, for every $\Xi\in L^1(\R)$ and $a\in\mathcal{S}(\R^d\times\R^d)$:
$$\lim_{k\to\infty}\int_{\R\times\R^d\times\R^d} \Xi(t) a(x,\xi)W_{u^{\eps_k}(t)}^{\eps_k}(x,\xi)dx\,d\xi\, dt=\int_{\R\times\R^d\times\R^d}\Xi(t)a(x,\xi)\mu_t(dx,d\xi)dt.$$
Moreover,  for every $\Xi\in L^1(\R)$ and $\phi\in\mathcal{C}_0(\R^d)$: 
$$\lim_{k\to\infty}\int_\R\int_{\R^d} \Xi(t) \phi(x)|u^{\eps_{k}}(t,x)|^2 dx \,dt=\int_\R\int_{\R^d\times\R^d} \Xi(t)\phi(x) \mu_t( dx,d\xi) dt.$$ 
It turns out that the fact that $\left(u^{\eps_k}\right)_{k \in \N}$ is a sequence of solutions to~(\ref{eq:disp}) imposes certain restrictions on the measures $\mu_t$ that can be attained as a limit. In the region of the phase space $\R^d_x\times\R^d_\xi$ where equation (\ref{eq:disp}) is dispersive (\textit{i.e.}, away from the non-zero critical points of $\lambda$), the energy of the sequence $\left(u^{\eps_k}\right)_{k \in \N}$ is dispersed at infinite speed towards infinity. More precisely, Wigner measures $\mu_t$ satisfy:
\begin{equation}\label{prop:loc}
{\rm supp}\, \mu_t\subset  \R^d\times\Lambda. 
\end{equation}
Proofs of these results can be found in \cite{CFM}. 

\medskip

In what follows, we investigate the precise structure of Wigner measures $\mu_t$. In order to get a better description of $\mu_t$ on $\R^d\times\Lambda$ we shall perform a second microlocalisation of the solutions above $\R^d\times\Lambda$. 

\subsection{Two-microlocal Wigner measures}\label{sec:31}
Two-microlocal Wigner measures are objects designed to describe in a precise way oscillation and concentration effects exhibited by sequences of functions on a submanifold $X\subset\R^d\times\R^d$ of phase space. Roughly speaking, the idea consists in working in an enlarged phase space by adding an additional variable that will give a more precise description of the behavior of the Wigner functions close to the set $X$. These measures were introduced in \cite{Fthese:95,F2micro:00, MillerThesis, Nierscat} and further developed in \cite{FG:02} in a slightly different framework.
 
Here, we are particularly interested in the situation where $X=\Lambda$, the set of critical points of the symbol $\lambda$. In any case, the theory can be developed without assuming that we are dealing with solutions to an evolution equation, and the submanifold $X$ is not required to have some dynamical meaning. It is convenient to present first the results in this more general framework.

\medskip

We are first going to assume that $X$ is an affine manifold of $\R^d$ with codimension $p$ given by the equations:
$$ \xi_{r+1}=\xi_0^{1},\; ... \;,\xi_d=\xi_0^p,\quad {\rm for } \quad\xi_0''=(\xi_0^{1},...\,,\xi_0^p)\in\R^p,\quad r:=d-p,$$
and, given $\xi\in\R^d$, we will set $\xi=(\xi',\xi'')$ with $\xi''=(\xi_{r+1},...\, ,\xi_d)$.

\begin{remark} 
Note that any submanifold of codimension $p$ in $\R^d$ can be locally identified to a linear space $\{\xi''=0\}$ by using a suitable coordinate system, which may be used to extend the analysis of this section to this more general setting. However, in doing so, it turns out that the dependence on the choice of local coordinates becomes an issue and requires special care. We refer the reader to \cite{CFM} for precise results in that setting. 
\end{remark}

\medskip

Now we will extend the phase space $\R^d_x\times\R^d_\xi$ with a new variable $\eta\in \overline{ \R^p}$, where $\overline{\R^p}$ is the compactification of $\R^p$ obtained by adding a sphere ${\bf S}^{p-1}$ at infinity. The test functions associated to this extended phase space are functions $a\in{\mathcal C}^\infty(\R^d_x\times\R^d_\xi\times\R^p_\eta)$ which satisfy the two following properties:
\begin{enumerate}
\item there exists a compact $K \subset \R^{2d}$ such that, for all $\eta\in\R^p$, the map $(x,\xi)\longmapsto a(x,\xi,\eta)$ is a smooth function compactly supported in $K$;
\item there exists a function $a_\infty$ defined on $\R^d\times\R^d\times{\bf S}^{p-1}$ and $R_0>0$ such that, 
$${\rm  if}\;\; \|\eta\|>R_0, \;\;{\rm then}\;\; a(x,\xi,\eta)=a_\infty\left(x,\xi,\frac{\eta}{\|\eta\|}\right).$$  
\end{enumerate}
We denote by $\mathcal{S}^0(p)$ the set of such functions; to every $a\in \mathcal{S}^0(p)$ we associate a pseudodifferential operator $\op_\eps^\sharp(a)$ as follows:  
\begin{equation}\label{def:2quant}
\op_\eps^\sharp(a)=\op_\eps(a^\sharp_\eps), \quad {\rm where } \quad a^\sharp_\eps(x,\xi)=a\left(x,\xi,\frac{\xi''-\xi_0''}{\eps }\right).
\end{equation}
In the above formula, the additional variable $\eta = \frac{\xi''-\xi_0''}{\eps}$ is introduced to capture in greater detail the concentration properties of a sequence of functions onto the set $\{\xi''=\xi_0''\}$. Moreover, notice that 
\begin{equation}\label{beth}
\op_\eps^\sharp(a)={\rm e}^{-i{x''\cdot \xi''_0\over\eps}}\op_1(a(x,\eps\xi',\xi_0''+\eps\xi'',\xi'')) {\rm e}^{i{x''\cdot \xi''_0\over\eps}},
\end{equation}
which implies in particular that the family $(\op_\eps^\sharp(a))_{\eps > 0}$ is uniformly bounded in ${\mathcal L}\left(L^2(\R^d)\right)$.

\medskip 

Now, let $\left(u^\eps\right)_{\eps > 0}$ be a sequence in ${\mathcal C}^0\left(\R,L^2(\R^d)\right)$ (so each $u^\eps$ is a continuous function of time into $L^2(\R^d)$) satisfying the uniform bounds: 
$$\exists C_0>0,\;\; \forall t\in \R, \;\;\| u^\eps(t,\cdot\,)\|_{L^2(\R^d)} \leq C_0.$$
Note that this is the case if $\left(u^\eps\right)_{\eps > 0}$ is a family of solutions to \ref{eq:disp} evolved from a sequence of initial data bounded in $L^2(\R^d)$. We will use these functions to define a linear functional $I^\eps_{u^\eps}$ acting on $\mathcal{S}^0(p) \times L^1(\R)$ as:
$$I^\eps_{u^\eps}(a,\Xi) = \int_\R\Xi(t) \left(\op_\eps^\sharp(a)u^\eps(t,\cdot),u^\eps(t,\cdot)\right)_{L^2(\R^d)}dt.$$

These functionals are actually lifts to the extended phase space $\R^d_x\times\R^d_\xi\times\overline{ \R^p}_\eta$ of the Wigner distributions $W^\eps_{u^\eps(t,\cdot\,)}$. To see this, note that any function $a\in\mathcal{C}_0^\infty(\R^d_x\times \R^d_\xi)$ can be identified to an elements of $\mathcal{S}^0(p)$ that is constant in the variable $\eta$; clearly, under this identification one has: 
\begin{equation}\label{eq:eqquant}
\op_\eps^\sharp(a)=\op_\eps(a),
\end{equation}
which implies, for these kind of $a$ independent of $\eta$:
$$I^\eps_{u^\eps}(a,\Xi) =\int_\R\int_{\R^d\times\R^d} \Xi(t)a(x,\xi)W^\eps_{u^\eps(t,\cdot\,)}(x,\xi)\,dx\,d\xi, \quad \forall a\in\mathcal{C}^\infty_0(\R^d\times\R^d).$$
Therefore, letting $\mu_t$ denote the Wigner measures of $\left(u^\eps(t,\cdot)\right)_{\eps > 0}$ as described in Section \ref{sec:wigner_distributions}, we have by dominated convergence and the definition of $\mu_t$:
\begin{equation}\label{eq:lift}
I^\eps_{u^\eps}(a,\Xi) \Tend{\eps}{0} 
\int_\R\Xi(t)\int_{\R^d\times\R^d} a\left(x,\xi\right) \mu_t(dx,d\xi) dt, \quad \forall a\in\mathcal{C}^\infty_0(\R^d\times\R^d).
\end{equation}

Nevertheless, when the convergence of $\left(I^\eps_{u^\eps}\right)_{\eps > 0}$ is tested against general functions on the extended phase, the resulting accumulation points have some additional structure:
\begin{proposition}\label{prop:2micro}
Suppose that $\left(u^\eps\right)_{\eps > 0}$ and $\mu_t$ are as above. Then, up to the extraction of a sequence $(\eps_k)_{k \in \N}$, there exist a $L^\infty$-map $\gamma:t\longmapsto \gamma_t$ taking values in the set of positive Radon measures on $\R^{d}\times\R^r\times{\bf S}^{p-1}$ and a $L^\infty$-map $\mathtt{M}:t\longmapsto \mathtt{M}_t$ into the set of operator-valued positive measures on $\R\times\R^{2r}$ that are trace class operators on $L^2(\R^p_y)$ such that, for every $a\in \mathcal{S}^0(p)$ and $\Xi\in L^1(\R)$, 
\begin{eqnarray}\nonumber
I^\eps_{u^\eps}(a,\Xi) &\Tend{\eps}{0} &
\int_\R\Xi(t)\int_{\{\xi''\not=\xi_0''\}} a_\infty\left(x,\xi,\frac{\xi''-\xi_0''}{\|\xi''-\xi_0''\|}\right) \mu_t(dx,d\xi) dt \\
\nonumber & & 
+\int_\R\Xi(t) \int_{\R^d\times\R^{r}\times {\bf S}^{p-1}} a_\infty(x,\xi',\xi_0'',\omega) \gamma_t(dx,d\xi',d\omega) dt\\
\label{2mic:coord} & & 
+ \int_\R \Xi(t) \int_{\R^{2r}} {\rm Tr}_{L^2(\R^p)}  \left[ a^W(x',y,\xi',\xi_0'',D_y) \mathtt{M}_t(dx',d\xi') \right] dt,
\end{eqnarray}
where, for every $(x',\xi')\in \R^{2r}$, $a^W(x',y,\xi',\xi_0'',D_y)$ denotes the pseudodifferential operator acting on $L^2(\R^p)$ obtained by the Weyl quantization of the symbol $(y,\eta)\longmapsto a(x',y,\xi',\xi_0'',\eta)$.  
\end{proposition}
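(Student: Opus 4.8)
The plan is to establish \eqref{2mic:coord} by decomposing a general test function $a\in\mathcal{S}^0(p)$ according to the scale at which the frequency variable $\xi''$ approaches $\xi_0''$, and then showing that each regime contributes one of the three terms on the right-hand side. Fix a cut-off $\chi$ as in \eqref{def:chi}, and for $\delta>0$, $R>0$ write, at the level of the rescaled variable $\eta=(\xi''-\xi_0'')/\eps$,
\begin{equation*}
1=\bigl(1-\chi\bigl(\tfrac{\eta}{R}\bigr)\bigr)+\Bigl(\chi\bigl(\tfrac{\eta}{R}\bigr)-\chi\bigl(\tfrac{\eps\eta}{\delta}\bigr)\Bigr)+\chi\bigl(\tfrac{\eps\eta}{\delta}\bigr),
\end{equation*}
which corresponds, back in the variable $\xi''$, to the three zones $\|\xi''-\xi_0''\|\gtrsim\eps R$ (oscillation at the critical scale $\eps$), $\eps R\lesssim\|\xi''-\xi_0''\|\lesssim\delta$ (intermediate scales), and $\|\xi''-\xi_0''\|\lesssim\delta$ with $|\eta|\to\infty$ absorbed, i.e.\ the concentration zone where $\xi''$ sits essentially at $\xi_0''$. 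First I would treat the ``far'' piece $a\cdot(1-\chi(\eta/R))$: here $a$ agrees with $a_\infty(x,\xi,\eta/\|\eta\|)$, and since $\eta/\|\eta\|=(\xi''-\xi_0'')/\|\xi''-\xi_0''\|$ is $\eps$-independent, $\op_\eps^\sharp$ of this piece is (up to $o(1)$ errors from the cut-off boundary) an ordinary $\op_\eps$ of a symbol in $\mathcal{C}^\infty_0(\R^{2d}\setminus\{\xi''=\xi_0''\})$, so by \eqref{eq:lift} its limit is the first integral in \eqref{2mic:coord} — after letting $R\to\infty$ one captures all of $\mu_t$ restricted to $\{\xi''\ne\xi_0''\}$, using \eqref{prop:loc} to discard the contribution of $\mu_t$ away from $\R^d\times\Lambda$.

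Next I would analyze the ``concentration'' piece $a\cdot\chi(\eps\eta/\delta)$, equivalently $a(x,\xi,\eta)\chi((\xi''-\xi_0'')/\delta)$. Using the conjugation formula \eqref{beth}, $\op_\eps^\sharp$ of this symbol is unitarily equivalent (via the modulation ${\rm e}^{ix''\cdot\xi_0''/\eps}$) to $\op_1$ of $a(x,\eps\xi',\xi_0''+\eps\xi'',\xi'')\chi(\eps\xi''/\delta)$; the point is that in the $y$-variable (dual to $\xi''$) no rescaling occurs, so this behaves like a semiclassical operator in the $x'$ variables with values in bounded operators on $L^2(\R^p_y)$, the operator part being the Weyl quantization $a^W(x',y,\xi',\xi_0'',D_y)$ in the limit. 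Extracting a subsequence, the quantities $\bigl(\op_\eps^\sharp(\cdot)u^\eps,u^\eps\bigr)$ define, for each $t$, a positive operator-valued functional on symbols in $(x',\xi')$; positivity (sharp Gårding / the fact that these are genuine operator expectations) plus the uniform $L^2$ bound gives a trace-class operator-valued measure $\mathtt{M}_t$ on $\R^{2r}$, bounded in $L^\infty_t$, producing the third term. The $L^\infty$ regularity in $t$ and the measurability of $t\mapsto\mathtt{M}_t$ follow by testing against $\Xi\in L^1(\R)$ and a standard separability/duality argument, exactly as for ordinary time-dependent Wigner measures in Section~\ref{sec:wigner_distributions}.

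Finally, the ``intermediate'' piece $a\cdot(\chi(\eta/R)-\chi(\eps\eta/\delta))$ is supported in $R\lesssim\|\eta\|$ and $\|\xi''-\xi_0''\|\lesssim\delta$; there $a=a_\infty(x,\xi,\eta/\|\eta\|)$ and, because $\xi''\to\xi_0''$ while $\eta\to\infty$, the symbol localizes in $\xi''$ to the value $\xi_0''$ but retains an angular dependence $\omega=\eta/\|\eta\|\in{\bf S}^{p-1}$. I would show that the associated functional, after subtracting the far and concentration contributions and taking the iterated limit $\eps\to0$, then $R\to\infty$, then $\delta\to0$, converges to a positive functional of $a_\infty(x,\xi',\xi_0'',\omega)$, which by the Riesz representation theorem is integration against a positive Radon measure $\gamma_t$ on $\R^d\times\R^r\times{\bf S}^{p-1}$; again $L^\infty$-in-$t$ boundedness comes from the uniform mass bound. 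Assembling the three limits along a common subsequence (diagonal extraction over countable dense families of $a$ and $\Xi$) yields \eqref{2mic:coord}.

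The main obstacle I anticipate is controlling the interface between the three zones uniformly: the cut-off functions $\chi(\eta/R)$ and $\chi(\eps\eta/\delta)$ have symbols whose semiclassical seminorms blow up as $R\to\infty$ or $\delta\to0$, so one cannot simply apply symbolic calculus term by term. The key technical lemma — which I would isolate and prove first — is that the ``transition'' operators $\op_\eps^\sharp$ of symbols like $\bigl(\chi(\eta/R)-\chi(2\eta/R)\bigr)a$ have operator norm controlled independently of $\eps$ (this is what \eqref{beth} is designed to give, since after conjugation these become genuine $\op_1$ of uniformly bounded symbols), so that the errors are genuinely $o(1)$ in the correct iterated-limit order $\lim_{\delta\to0}\lim_{R\to\infty}\lim_{\eps\to0}$. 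This is precisely the mechanism that makes the second-microlocal decomposition exhaustive and non-overlapping, and verifying it carefully — rather than the identification of the individual limit objects, which is routine once the decomposition is in place — is where the real work lies.
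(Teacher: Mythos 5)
The paper itself does not prove this proposition---it refers to Theorem~1 of \cite{F2micro:00}---but the strategy you outline (a three-scale splitting of the symbol, uniform operator bounds via \eqref{beth} and Calder\'on--Vaillancourt, positivity plus Riesz representation, diagonal extraction, iterated limits in the order $\eps\to0$, $R\to\infty$, $\delta\to0$) is indeed the standard mechanism behind that reference and behind the characterizations \eqref{eq:testinfty}--\eqref{eq:testfin} stated right after the proposition. However, your specific decomposition is wrong in a way that breaks the identification of the three limits. Since $\chi(\eps\eta/\delta)=1$ on the support of $\chi(\eta/R)$ once $\eps<\delta/(4R)$, your middle piece equals $-\bigl(1-\chi(\eta/R)\bigr)\chi\bigl((\xi''-\xi_0'')/\delta\bigr)\,a$, i.e.\ exactly \emph{minus} the symbol $a^{R,\delta}$ of \eqref{eq:testinfty}; it is nonpositive when $a\ge 0$, so it cannot converge to the positive functional defining $\gamma_t$, and the Riesz-representation step fails for it. Correspondingly, your ``far'' piece $a\,(1-\chi(\eta/R))$ is \emph{not}, up to $o(1)$, an ordinary $\op_\eps$ of a fixed symbol in $\mathcal{C}^\infty_0(\R^{2d}\setminus\{\xi''=\xi_0''\})$: the cutoff $1-\chi\bigl((\xi''-\xi_0'')/(\eps R)\bigr)$ has support invading every neighbourhood of $\{\xi''=\xi_0''\}$ and derivatives of size $(\eps R)^{-1}$, so \eqref{eq:lift} does not apply, and its iterated limit is the first term \emph{plus} the entire $\gamma_t$ term (that is precisely what \eqref{eq:testinfty} says). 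Likewise your ``concentration'' piece $a\,\chi\bigl((\xi''-\xi_0'')/\delta\bigr)$ equals $a_{R,\delta}+a^{R,\delta}$, so its limit is the $\mathtt{M}_t$ term \emph{plus} the $\gamma_t$ term; moreover, without the inner cutoff $\chi(\eta/R)$ the limiting test operator $a^W(x',y,\xi',\xi_0'',D_y)$ is bounded but not compact on $L^2(\R^p_y)$, so the mass escaping to $\eta=\infty$ (which is $\gamma_t$) leaks out of the trace pairing and the weak-$\star$ compactness argument producing a trace-class-valued measure does not close.

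The fix is the nested splitting actually encoded in \eqref{eq:testinfty}--\eqref{eq:testfin}, namely $1=\bigl(1-\chi((\xi''-\xi_0'')/\delta)\bigr)+\chi((\xi''-\xi_0'')/\delta)\bigl(1-\chi(\eta/R)\bigr)+\chi((\xi''-\xi_0'')/\delta)\chi(\eta/R)$. All three pieces are then nonnegative when $a\ge0$ and each converges to exactly one term: the first is an $\eps$-independent, compactly supported symbol vanishing near $\{\xi''=\xi_0''\}$, so \eqref{eq:lift} applies and letting $\delta\to0$ gives the first integral; the second yields $\gamma_t$ by positivity, monotonicity in $R$ and $\delta$, and Riesz; the third, after conjugation by \eqref{beth}, is tested against a genuinely compact operator in the $y$-variable, which is what produces the trace-class $\mathtt{M}_t$. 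With that correction the rest of your outline (uniform bounds on the transition operators, existence of the iterated limits after a single extraction, diagonalization over countable dense families of $a$ and $\Xi$) goes through and reproduces the argument of \cite{F2micro:00}.
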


The proof of this result is essentially identical to that of Theorem 1 in~\cite{F2micro:00} (except for the fact that here everything depends on $t$); 
see also~\cite{Fthese:95,AM:14,AFM:15} for very closely related results in a slightly different context. 

\medskip

In order to enlighten the nature of the different objects involved in formula \eqref{2mic:coord}, we emphasize the following characterization : let 
$\chi\in{\mathcal C} _0^\infty (\R^p)$ be a cut-off function such that $0\leq \chi\leq 1$, $\chi(\eta)=1$ for $\|\eta\|\leq 1$ and  $\chi(\eta)=0$ for $\|\eta\|\geq 2$; then, one has: (see again the proof of Theorem 1 in \cite{F2micro:00}, or \cite{Fthese:95,AM:14,AFM:15}):
\begin{itemize}
\item[(i)] the measure $\gamma_t$ in Proposition \ref{prop:2micro} is obtained through the limiting procedure
$$
\int_\R\Xi(t) \int_{\R^d\times\R^{r}\times {\bf S}^{p-1}} a_\infty(x,\xi',\xi_0'',\omega) \gamma_t(dx,d\xi',d\omega) dt
= \lim_{\delta\to 0}\lim_{R\to\infty} \lim_{\eps\to 0} I^\eps_{u^\eps}(a^{R,\delta},\Xi),
$$
where
\begin{equation}\label{eq:testinfty}
a^{R,\delta}(x,\xi,\eta)=a(x,\xi,\eta)\chi\left(\frac{\xi''-\xi_0''}{\delta}\right) \left(1-\chi\left(\frac{\eta}{R}\right)\right);
\end{equation} 

\item[(ii) ] the measure $\mathtt{M}_t$ in Proposition \ref{prop:2micro} is obtained as the iterated limits
$$
\int_\R \Xi(t) \int_{\R^{2r}} {\rm Tr}_{L^2(\R^p)} [ a^W(x',y,\xi', \xi_0'',D_y) \mathtt{M}_t(dx',d\xi') ] dt
=\lim_{\delta\to 0}\lim_{R\to\infty} \lim_{\eps\to 0} I^\eps_{u^\eps}(a_{R,\delta},\Xi),
$$
where
\begin{equation}\label{eq:testfin}
a_{R,\delta}(x,\xi,\eta)=a(x,\xi,\eta)\chi\left(\frac{\xi''-\xi_0''}{\delta}\right) \chi\left(\frac{\eta}{R}\right).
\end{equation}
\end{itemize}

The presence of the cut-off $\chi(\eta/R)$ explains the different roles played by $\gamma_t$ and $\mathtt{M}_t$. The measure~$\mathtt{M}_t$ captures the fraction of the $L^2$-mass of the sequence $(u^\eps)$ that concentrates onto $\{\xi''=\xi_0''\}$ at rate precisely $\eps$. The measure $\gamma_t$, on the other hand, describes how the $L^2$-mass of the sequences concentrates on $\{\xi''=\xi_0''\}$ at a slower rate.

\medskip

Besides, it is convenient to use a decomposition of $\mathtt{M}_t$ based on the Radon-Nikodym Theorem.
Define the map $\nu : t\longmapsto \nu_t$  by 
$$\nu_t(dx',d\xi')={\rm Tr}_{L^2(\R^{p})} \mathtt{M}_t(dx',d\xi').$$
This $L^\infty$-map
is valued in the set of positive measures on $\R^{2r}$ and there exists a measurable map $M : (t, x',\xi')\longmapsto M_t(x',\xi')$ valued in the set of self-adjoint, positive, trace-class operators on $L^2(\R^p)$ such that 
$$\mathtt{M}_t(dx',d\xi')= M_t(x',\xi')  \nu_t(dx',d\xi').$$
Note that, by construction, we have 
${\rm Tr}_{L^2(\R^{p})} M_t(x',\xi')=1.$
We are then left with three objects, $\gamma_t$, $\nu_t$ and~$M_t$. 

\medskip

Note that the results we have presented so far hold without assuming that $u^\eps$ solves an evolution equation, nor that $\{\xi''=\xi_0''\}$ is the set of critical points of the function $\lambda$. The fact that the sequence $\left(u^{\eps_k}(t,\cdot\,)\right)_{k \in \N}$ generating $\mu_t$ consists of solutions to equation (\ref{eq:disp}) and that $\Lambda$ is the set of critical points of its symbol implies additional regularity and propagation properties on the measures $\gamma_t$ and $\mathtt{M}_t$ that we will use in the next section. Let us anticipate that the latter propagates following a Heisenberg equation, whereas $\gamma_t$, enjoys an additional geometric invariance. Finally, it is not hard to prove (though we will not do that here, see \cite{CFM}) that $\nu_t$ does not depend on $t$, and in fact $\nu_t=\nu^0$, which is the measure appearing in the statement of Theorem \ref{theo:nondis}, only depends on the sequence of initial data.

\medskip

Let us mention that the use of two-microlocal semiclassical measures for dispersive equations was initiated in \cite{MaciaTorus}, in the context of the Schrödinger equation on the torus. These results were largely extended and improved in subsequent works \cite{AM:14,AFM:15}. The reader might find interesting to compare the results of the present note to those in the aforementioned references.


\section{The particular case: countable critical points}\label{sec:two_microlocal}

This section is mainly devoted to a sketch of the proof of Theorems~\ref{theo:disc}, \ref{theo:data}, and to the analysis of the examples of Proposition~\ref{prop:disp1}.

\subsection{Two microlocal Wigner measures associated to a critical point}\label{sec:isolated_points}

Our goal in this section will be to compute the restriction to $\{\xi=\xi_0\}$, with $\xi_0\in\Lambda$, of the semiclassical measure $\mu_t$ associated to sequences of solutions to (\ref{eq:disp}) in terms of quantities that depend only on the sequence of initial data. 

\medskip

The results of the previous section applied to the particular case $\{\xi=\xi_0\}$, $p=d$, ensure the existence of measures $\gamma_t\in\mathcal{M}_+(\R^d\times{\bf S}^{d-1})$ and of a positive family of Hermitian operators $M_t\in\mathcal{L}^1(L^2(\R^d))$ such that, for all $(a,\Xi)\in\mathcal{S}^0(d)\times L^1(\R)$,
\begin{equation}\label{eq:sgamma0}
\int_\R\Xi(t) \int_{\R^d\times{\bf S}^{d-1}} a_\infty(x,\xi_0,\omega) \gamma_t(dx,d\omega) dt
 =\lim_{\delta\to 0}\lim_{R\to\infty} \lim_{\eps\to 0} I^\eps_{u^\eps}(a^{R,\delta},\Xi)
\end{equation}
and
\begin{equation}\label{eq:sM0}
 \int_\R \Xi(t) {\rm Tr}_{L^2(\R^d)} [ a^W(y,\xi_0,D_y)   {M}_t] dt
=\lim_{\delta\to 0}\lim_{R\to\infty} \lim_{\eps\to 0} I^\eps_{u^\eps}(a_{R,\delta},\Xi),
\end{equation}
where
$a^{R,\delta}(x,\xi,\eta)$ and $a_{R,\delta}(x,\xi,\eta)$ are defined in~(\ref{eq:testinfty}) and (\ref{eq:testfin}) respectively. 
The localization property~\eqref{prop:loc} and Proposition~\ref{prop:2micro}, together with identity~(\ref{eq:lift}), then assert that, for every $b\in\mathcal{C}^\infty_0(\R^d\times\R^d)$ and a.e. $t\in\R$:
\begin{equation}\label{eq:mopsimple}
\int_{\{\xi=\xi_0\}}b(x,\xi)\mu_t(dx,d\xi) = \int_{\R^d\times{\bf S}^{d-1}} b(x,\xi_0)\gamma_t(dx,d\omega) +{\rm Tr}_{L^2(\R^d)}[ b^W(\,\cdot\,,\xi_0) M_t].
\end{equation}
The fact that $u^\eps$ solves equation (\ref{eq:disp}) implies that $\gamma_t$ and $M_t$ enjoy the following additional properties:
\begin{theorem}\label{theo:2micro}
Let $(u^{\eps})_{\eps > 0}$ be a sequence of solutions to (\ref{eq:disp}) issued from a $L^2(\R^d)$-bounded sequence of initial data $(u^\eps_0)_{\eps > 0}$, then:
\begin{itemize}
\item[(i)] For almost every $t \in \R$, the measure $\gamma_t$ is invariant through the flow 
$$\phi^2_s : \R^d\times{\bf S}^{d-1}\ni (x,\omega) \longmapsto (x+s \, \nabla^2\lambda(\xi_0)\omega,\omega)\in\R^d\times{\bf S}^{d-1}.$$
\item[(ii)] $\,M_t=|u_{\xi_0}(t,\cdot)\rangle \langle u_{\xi_0}(t,\cdot)|$, where $u_{\xi_0}$ solves 
\begin{equation}\label{eq:Phi}
\left\lbrace 
\begin{array}{l}
i\partial_t u_{\xi_0}(t,x)= \frac{1}{2}\nabla^2\lambda(\xi_0)D_x\cdot D_x  u_{\xi_0}(t,x) +V(x)u_{\xi_0}(t,x), \vspace{0.2cm} \\
u_{\xi_0}|_{t=0}(x)=u_{\xi_0}^0(x),
\end{array}
\right.
\end{equation}
and $u_{\xi_0}^0$ is a weak limit in $L^2(\R^d)$ of $({\rm e}^{-\frac{i}{\eps} \xi_0\cdot x} u^\eps_0)_{\eps > 0}$ when $\eps \to 0^+$.
\end{itemize}
\end{theorem}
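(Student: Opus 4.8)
The plan is to prove the two assertions of Theorem~\ref{theo:2micro} by exploiting the equation (\ref{eq:disp}) through suitable commutator identities tested against the two-microlocal operators $\op_\eps^\sharp(a)$, following the now standard strategy for deriving propagation laws for semiclassical measures. First I would set up the auxiliary family $w^\eps(t,\cdot):={\rm e}^{-\frac{i}{\eps}\xi_0\cdot x}u^\eps(t,\cdot)$. Conjugation by the plane wave ${\rm e}^{\frac{i}{\eps}\xi_0\cdot x}$ translates the Fourier multiplier $\lambda(\eps D_x)$ into $\lambda(\xi_0+\eps D_x)$, and since $\xi_0\in\Lambda$ we have $\nabla\lambda(\xi_0)=0$, so the Taylor expansion gives
\begin{equation*}
\lambda(\xi_0+\eps D_x)=\lambda(\xi_0)+\frac{\eps^2}{2}\nabla^2\lambda(\xi_0)D_x\cdot D_x+O(\eps^3 (D_x)^3).
\end{equation*}
After dividing the equation by $\eps^2$ and absorbing the constant $\lambda(\xi_0)/\eps^2$ into a (harmless) global phase, $w^\eps$ solves, up to an $O(\eps)$ error in a suitable sense,
\begin{equation*}
i\partial_t w^\eps=\tfrac12\nabla^2\lambda(\xi_0)D_x\cdot D_x\, w^\eps+V w^\eps+O(\eps);
\end{equation*}
note that the $\eps$-oscillation hypothesis {\bf A0} is exactly what is needed to control the remainder $O(\eps^3(D_x)^3)/\eps^2$ uniformly. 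By formula (\ref{beth}), the bracket $(\op_\eps^\sharp(a)u^\eps,u^\eps)$ appearing in $I^\eps_{u^\eps}$ equals $(\op_1(a(x,\eps\xi,\xi))w^\eps,w^\eps)$, so it suffices to work entirely with $w^\eps$ and the fixed-scale quantization $\op_1$.

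Next, for part (ii), I would restrict to test functions of the type $a_{R,\delta}$ from (\ref{eq:testfin}), which after the rescaling produce, in the iterated limit $\eps\to0$, $R\to\infty$, $\delta\to0$, the operator $a^W(y,\xi_0,D_y)$ tested against $M_t$. Differentiating $t\mapsto(\op_1(\cdot)w^\eps(t),w^\eps(t))$ in time and inserting the evolution equation for $w^\eps$ yields a commutator $\tfrac{i}{\eps^2}([\,\lambda(\xi_0+\eps D_x)/\eps^2\,\dots])$ — more precisely, after the Taylor expansion, the commutator with $\tfrac12\nabla^2\lambda(\xi_0)D_x\cdot D_x+V$. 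Passing to the limit, one obtains that $M_t$ solves the Heisenberg equation $i\partial_t M_t=[\tfrac12\nabla^2\lambda(\xi_0)D_x\cdot D_x+V,M_t]$ with $M_0=\lim|w^\eps_0\rangle\langle w^\eps_0|$ (in the weak operator topology of trace-class operators), which is the rank-one projection $|u_{\xi_0}^0\rangle\langle u_{\xi_0}^0|$ — here one uses that $M_0$ has trace $1$, is positive, and that $w^\eps_0\wto u_{\xi_0}^0$. Since the Heisenberg flow preserves rank-one projections and the propagated state is exactly the solution of the Schrödinger equation (\ref{eq:Phi}), one concludes $M_t=|u_{\xi_0}(t,\cdot)\rangle\langle u_{\xi_0}(t,\cdot)|$; uniqueness of the solution to the Heisenberg/Schrödinger Cauchy problem closes this part. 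I should double-check that the initial data of $M_t$ is correctly identified — this requires that $M_t$ be weakly continuous in $t$ down to $t=0$, which follows from the equation together with the bound ${\rm Tr}\,M_t=1$.

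For part (i), I would instead test against functions of the form $a^{R,\delta}$ from (\ref{eq:testinfty}), supported away from $\eta=0$, which in the limit see the measure $\gamma_t$ on $\R^d\times{\bf S}^{d-1}$. The key point is that on this ``intermediate scale'' region the free part of the evolution dominates: the rescaled symbol in the $\eta$ variable makes the commutator with $\tfrac12\nabla^2\lambda(\xi_0)D_x\cdot D_x$ contribute a transport term $s\mapsto x+s\nabla^2\lambda(\xi_0)\omega$ in the variable $x$ along the direction $\omega=\eta/\|\eta\|$, while the potential term $V$ contributes a commutator that vanishes in the limit because it is lower order relative to the scale separation (the bracket with $m_V$ carries a factor that tends to $0$ once $\delta\to0$ after $R\to\infty$). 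Carrying this out gives that $\gamma_t$ is invariant under the flow $\phi^2_s$. The main obstacle, in my view, is not any single conceptual step but the careful bookkeeping of the three iterated limits $\eps\to0$, $R\to\infty$, $\delta\to0$ in the right order while controlling the Taylor remainder of $\lambda$ and the various lower-order commutators — in particular, showing that the $O(\eps)$ error in the equation for $w^\eps$ and the potential commutator genuinely disappear in the relevant limit, uniformly in $t$ on compact intervals, so that the limiting identities can be integrated against an arbitrary $\Xi\in L^1(\R)$. This is precisely the kind of estimate handled in \cite{F2micro:00} and \cite{CFM}, so I would invoke those techniques rather than redo them in detail.
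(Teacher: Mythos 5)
Your overall strategy --- conjugate by ${\rm e}^{-\frac{i}{\eps}\xi_0\cdot x}$, Taylor-expand $\lambda$ at $\xi_0$ using $\nabla\lambda(\xi_0)=0$, and separate the two regimes through the cut-offs $a_{R,\delta}$ and $a^{R,\delta}$ --- matches the paper's. For part (ii), however, you take a genuinely different route: you derive a Heisenberg equation for $M_t$ in the limit and then solve it, after identifying $M_0$. The paper avoids the operator-valued Cauchy problem entirely: once the quantity is reduced to $\left(\op_1(A^0_R)u^\eps_{\xi_0}(t),u^\eps_{\xi_0}(t)\right)$ with $A^0_R$ compactly supported, compactness of $\op_1(A^0_R)$ together with the weak convergence $u^{\eps_k}_{\xi_0}(t)\rightharpoonup u_{\xi_0}(t)$ for each fixed $t$ gives the limit directly. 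That buys exactly the points you flag as needing a double-check (continuity of $M_t$ down to $t=0$, uniqueness for a Heisenberg flow with an unbounded generator). Note also that the normalization ${\rm Tr}\,M_0=1$ plays no role here and is in general false for the unnormalized object $\mathtt{M}_t$ of Proposition~\ref{prop:2micro}; what identifies the limit as $|u^0_{\xi_0}\rangle\langle u^0_{\xi_0}|$ is simply that compact operators turn weak convergence into convergence of quadratic forms.

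Two concrete problems. First, your claim that \textbf{A0} controls the Taylor remainder $\eps^{-2}O(\eps^3 D_x^3)$ is wrong: \textbf{A0} only confines the frequencies of $u^\eps$ to $O(R/\eps)$, so after conjugation $\eps |D_x|^3 w^\eps$ can be of size $R^3/\eps^2$. What tames the remainder is the second-microlocal cut-off $\chi(\eta/R)$ inside $a_{R,\delta}$, which localizes $\eps D_x-\xi_0$ to a ball of radius $O(R\eps)$, i.e.\ $D_x$ acting on $w^\eps$ to frequencies $O(R)$; at fixed $R$ the remainder is then $O(R^3\eps)$ and disappears in the iterated limit. Without that localization the approximate equation for $w^\eps$ with an $O(\eps)$ error in $L^2$ is false. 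Second, for part (i) your sketch stops exactly where the difficulty lies: a transport term in the commutator would a priori produce a propagation law linking $\gamma_t$ at different times, not invariance of $\gamma_t$ at each fixed $t$. The actual mechanism is a scale mismatch. Writing the generator of $\phi^2_s$ applied to the flowed-out symbol as $\nabla\lambda(\xi)\cdot\nabla_x\widetilde b$ with $\widetilde b=|\xi-\xi_0|^{-1}b$, one has $\|\op_\eps(\widetilde b)\|=O((R\eps)^{-1})$ on the support $\{|\eta|\ge R\}$, and the commutator identity converts this into $\eps\,\frac{d}{dt}\left(\op_\eps(\widetilde b)u^\eps,u^\eps\right)$ up to $O(\eps)+O(1/R)$; integration by parts in $t$ against $\Xi$ then yields $O(1/R)$. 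It is this extra factor $\eps$ against the $(R\eps)^{-1}$ bound that forces the $s$-derivative of the flowed-out quantity to vanish, hence the invariance. (The potential commutator is $O(1/R)$ because $a$ is homogeneous of degree $0$ in $\eta$ at infinity, so $\nabla_\eta a=O(1/R)$ there; it has nothing to do with letting $\delta\to0$.)
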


The localization property \eqref{prop:loc} and Corollary~\ref{cor:mu} below imply together Theorem~\ref{theo:disc}.

\begin{corollary}\label{cor:mu}
For every $\xi_0\in\Lambda$ and almost every $t \in \R$ one has 
$$\mu_t (dx,d\xi)\rceil_{\{\xi=\xi_0\}} \geq  |u_{\xi_0}(t,x)|^2 dx \,\delta_{\xi_0}(d\xi),$$
with equality if $\xi_0$ is a non-degenerate critical point.
\end{corollary}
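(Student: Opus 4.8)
The plan is to combine the structural identity \eqref{eq:mopsimple} with the two dynamical facts supplied by Theorem \ref{theo:2micro}. First I would restrict \eqref{eq:mopsimple} to nonnegative test functions $b$ that depend only on $x$: for such $b\ge 0$ one has $b^W(\,\cdot\,,\xi_0)=m_b$, the multiplication operator, and the trace term becomes ${\rm Tr}_{L^2(\R^d)}\left[m_b\,|u_{\xi_0}(t,\cdot)\rangle\langle u_{\xi_0}(t,\cdot)|\right]=\int_{\R^d} b(x)\,|u_{\xi_0}(t,x)|^2\,dx$, using part (ii) of Theorem \ref{theo:2micro}. Since $\gamma_t$ is a positive measure, the $\gamma_t$-term in \eqref{eq:mopsimple} is nonnegative, whence
$$\int_{\{\xi=\xi_0\}} b(x)\,\mu_t(dx,d\xi)\ \ge\ \int_{\R^d} b(x)\,|u_{\xi_0}(t,x)|^2\,dx$$
for every $b\in\mathcal{C}^\infty_0(\R^d)$ with $b\ge 0$; by density and positivity of Radon measures this is exactly $\mu_t\rceil_{\{\xi=\xi_0\}}\ge |u_{\xi_0}(t,x)|^2\,dx\,\delta_{\xi_0}(d\xi)$, which is the claimed inequality.

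For the equality statement under the non-degeneracy hypothesis, the point is to show that $\gamma_t=0$ for almost every $t$. Here I would use part (i) of Theorem \ref{theo:2micro}: $\gamma_t$ is invariant under the flow $\phi^2_s(x,\omega)=(x+s\,\nabla^2\lambda(\xi_0)\omega,\omega)$ on $\R^d\times\mathbf{S}^{d-1}$. When $\xi_0$ is non-degenerate, $\nabla^2\lambda(\xi_0)$ is invertible, so $\nabla^2\lambda(\xi_0)\omega\neq 0$ for every $\omega\in\mathbf{S}^{d-1}$; hence every orbit $s\mapsto x+s\,\nabla^2\lambda(\xi_0)\omega$ is an honest straight line escaping to infinity in $x$. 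A finite measure that is invariant under such a flow and whose $x$-marginal is a finite measure on $\R^d$ must vanish: integrating a test function $\chi(x/T)$ along the flow and letting the translation parameter run off to infinity forces the total mass of $\gamma_t$ to be zero. (One can phrase this cleanly: $\gamma_t$ invariant under translation by $v:=\nabla^2\lambda(\xi_0)\omega\neq 0$ in the $x$ variable, fibrewise in $\omega$, forces $\gamma_t\equiv 0$ since no nonzero finite measure on $\R^d$ is translation-invariant.) With $\gamma_t=0$, \eqref{eq:mopsimple} becomes the identity $\int_{\{\xi=\xi_0\}}b(x,\xi)\mu_t(dx,d\xi)={\rm Tr}_{L^2(\R^d)}[b^W(\,\cdot\,,\xi_0)M_t]$, which specialised to $b=b(x)$ gives equality in the corollary.

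The main obstacle I anticipate is the rigorous justification that flow-invariance of the finite measure $\gamma_t$ under a nonvanishing constant velocity field in the $x$-variable forces $\gamma_t=0$; this is elementary but must be done carefully because $\gamma_t$ lives on the non-compact space $\R^d\times\mathbf{S}^{d-1}$ and the invariance is only almost-everywhere in $t$ — one needs that the bad set in $\omega$ (where $\nabla^2\lambda(\xi_0)\omega$ could vanish) is empty precisely by non-degeneracy, and that the fibering over $\omega\in\mathbf{S}^{d-1}$ is handled by disintegration. Everything else is a direct reading of \eqref{eq:mopsimple} together with the already-granted Theorem \ref{theo:2micro}; in particular the inequality part is immediate from positivity of $\gamma_t$ and requires no further work.
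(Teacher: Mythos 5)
Your proof is correct and follows essentially the same route as the paper: the inequality is read off from \eqref{eq:mopsimple} together with the positivity of $\gamma_t$ and part (ii) of Theorem \ref{theo:2micro}, and the equality in the non-degenerate case comes from showing $\gamma_t=0$ via its invariance under the flow $\phi^2_s$. The only difference is that where the paper invokes the general Lemma \ref{lem:classicdisp} (a finite invariant measure for a flow with linear escape is supported on its stationary points, proved in \cite{CFM}), you substitute the elementary, fibrewise observation that no nonzero finite measure on $\R^d$ is invariant under translation by the nonzero vector $\nabla^2\lambda(\xi_0)\omega$ — a perfectly valid self-contained version of the same step.
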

\begin{proof}
We are going to show that the  measure $\gamma_t$ vanishes identically if $\xi_0$ is non-degenerate. This is a consequence of the following result, whose proof can be found in \cite{CFM}.
\begin{lemma}\label{lem:classicdisp} Let be $\Phi_s:\R^d\times\R^d\To\R^d\times\R^d$ a flow satisfying: for every compact $K\subset\R^d\times\R^d$ containing no stationary points of $\Phi$, there exist constants $\alpha,\beta>0$ such that:
$$\alpha |s| - \beta \leq \|\Phi_s(x,\xi)\| \leq \alpha|s|+\beta,\quad\forall(x,\xi)\in K\;\forall s \in \R.$$ 
Moreover, let $\mu$ be a finite, positive Radon measure on $\R^d\times\Omega$ that is invariant by the flow $\Phi_s$. Then $\mu$ is supported on the set of stationary points of  $\Phi_s$.
\end{lemma}
When this lemma is applied to the measure $\gamma_t$ and the flow $\phi^2_s$, one finds out that $\gamma_t=0$. 
\end{proof}

\begin{remark}\label{rem:memesmesures}
The formula in Corollary \ref{cor:mu} shows in particular that the semiclassical measure $\mu_t$ is not uniquely determined by the semiclassical measure $\mu_0$ of the sequence of initial data. Suppose that $\xi_0$ is a non-degenerate critical point; if $u^\eps_0=\theta(x){\rm e}^{{i\over \eps}\xi_0\cdot x}$, $\|\theta\|_{L^2(\R^d)}=1$, then $u_{\xi_0}^0=\theta\neq 0$ and $\mu_0 = dx\otimes\delta_{\xi_0}$ and Corollary \ref{cor:mu} tells us that:
$$\mu_t(dx,d\xi)=|u_{\xi_0}(t,x)|^2dx\,\delta_{\xi_0}(d\xi)\neq 0.$$
However, if we choose initial data $v^\eps_0(x)=\theta(x){\rm e}^{{i\over \eps}(\xi_0+\eps^\beta\xi_0)\cdot x}$ with $\beta\in (0,1)$, they have the same semiclassical measure $\mu_0$ as $(u^\eps_0)_{\eps > 0}$, whereas  Corollary \ref{cor:mu} now shows that the  measure $\mu_t(dx,d\xi)$  is $0$, since any weak limit of ${\rm e}^{-{i\over \eps}\xi_0\cdot x}v^\eps_0$ is $0$.
\end{remark}

\begin{proof}[Proof of Theorem \ref{theo:2micro}.]
Let us start proving part (i), namely the invariance of $\gamma_{t}$. Let $a\in{\mathcal S}_0(d)$ and  $a^{R,\delta}$ as defined in~(\ref{eq:testinfty}).
We set  
$$\widetilde{\phi}_{s}^2(x,\xi,\eta):= (x+s\nabla^2 \lambda (\xi_0) {\eta\over|\eta|}, \xi, \eta),\quad (x,\xi,\eta)\in\R^d\times\R^d\times\R^d$$
and we note that  $a^{R,\delta}\circ \widetilde \phi_s^2$ also is a symbol of ${\mathcal S}^0(d)$ (in particular, it is smooth because it is supported on $|\eta|>R$). Besides 
$$\left(a^{R,\delta}\circ \widetilde \phi_s^2\right)_\infty= a_\infty\circ \phi_s^2.$$
Our aim is to prove that for all $\Xi\in{\mathcal C}_0^\infty(\R_t)$, as $\eps$ goes to $0$, then $R$ to~$+\infty$ and finally $\delta$ to $0$,
\begin{equation}\label{claim80}
I^\eps_{u^\eps} \left(a^{R,\delta}\circ \widetilde \phi_s^2,\Xi\right)= I^\eps_{u^\eps} \left(a^{R,\delta},\Xi\right)+o(1).
\end{equation}
We observe that the quantification of $a^{R,\delta}\circ \widetilde \phi_s^2$ has the following property
\begin{eqnarray*}
\left(a^{R,\delta}\circ \widetilde \phi_s^2\right)_\eps^\sharp &=& a^{R,\delta}\left(x+{s\over |\xi-\xi_0| }\nabla^2\lambda(\xi_0)(\xi-\xi_0),\xi,{\xi-\xi_0\over\eps}\right)\\
&=& a^{R,\delta}\left(x+{s\over |\xi-\xi_0|}
\nabla\lambda(\xi),\xi,{\xi-\xi_0\over\eps}\right) + r_{\eps,\delta,R}(x,\xi)\end{eqnarray*}
with 
$$\| \op_\eps(r_{\eps,\delta,R})\|_{{\mathcal L}(L^2(\R^d))}=O(\delta).$$
For this, we have used that $|\xi-\xi_0|<\delta$ on the support of $a^{R,\delta}$, that $\nabla \lambda (\xi_0)=0$ and that there exists a smooth bounded tensor of degree~$3$, $\Gamma$, with bounded derivatives such that 
$$\nabla\lambda(\xi)=\nabla^2\lambda(\xi_0)(\xi-\xi_0) +\Gamma(\xi) [\xi-\xi_0,\xi-\xi_0].$$
As a consequence, the claim~(\ref{claim80}) is equivalent to proving that for all $\Xi\in{\mathcal C}_0^\infty(\R_t)$, as $\eps$ goes to $0$, then $\delta$ to $0$ and $R$ to~$+\infty$,
$$
I^\eps_{u^\eps} \left(b^{R,\delta}_\eps(s),\Xi\right) =  I^\eps_{u^\eps} \left(a^{R,\delta},\Xi\right)+o(1)
$$
where $b^{R,\delta}_\eps(s)$ is the symbol 
$$b^{R,\delta}_\eps (s)(x,\xi)= a^{R,\delta}\left(x+{s\over |\xi-\xi_0| }\nabla^2\lambda(\xi_0)(\xi-\xi_0),\xi,{\xi-\xi_0\over\eps}\right).$$
To this aim, we will show that for all $\Xi\in{\mathcal C}_0^\infty(\R_t)$, as $\eps$ goes to $0$, then $R$ to~$+\infty$ and finally $\delta$ to $0$,
\begin{equation}\label{claim81}
I^\eps_{u^\eps} \left(\partial_s b^{R,\delta}_\eps(s),\Xi\right) = o(1).
\end{equation}
We observe that 
\begin{eqnarray*}
\partial_s b^{R,\delta}_\eps(s)(x,\xi)& = & |\xi-\xi_0|^{-1} \nabla\lambda(\xi)  \cdot\nabla_x a^{R,\delta}\left(x+{s\over |\xi-\xi_0| }\nabla^2\lambda(\xi_0)(\xi-\xi_0),\xi,{\xi-\xi_0\over\eps}\right)\\
& = & \nabla\lambda(\xi) \cdot \nabla_x  \widetilde b^{R,\delta}_\eps (s,x,\xi),
\end{eqnarray*}
with 
$$\widetilde b^{R,\delta}_\eps (s,x,\xi):=  |\xi-\xi_0|^{-1} b_\eps^{R,\delta}(s)(x,\xi).$$ 
This function satisfies:  for all $\alpha\in\N^d$ there exists a constant $C_\alpha$ such that  for all $x,\xi\in\R^d$
$$|\widetilde b_\eps^{R,\delta} (s,x,\xi)| +|\partial_x^\alpha \widetilde b_\eps^{R,\delta}(x,\xi)|\leq C_\alpha(R\eps)^{-1}.$$
By the symbolic calculus of semiclassical pseudodifferential operators, we have 
\begin{eqnarray*}
\op_\eps^\sharp\left( \nabla\lambda(\xi)\cdot \nabla_x \widetilde b_\eps^{R,\delta}  \right)
&=&\frac{i}{\eps}\left[ \lambda(\eps D),\op_\eps \left(\widetilde b_\eps^{R,\delta} \right) \right] +O(\eps)+O(1/R)\\
&=& \frac{i}{\eps}\left[ \lambda(\eps D_x)+\eps^2V,\op_\eps\left(\widetilde b_\eps^{R,\delta} \right) \right] +O(1/R)+O(\eps).
\end{eqnarray*}
On the other hand,
$${d\over dt} \left( \op_\eps\left(\widetilde b_\eps^{R,\delta} \right) u^\eps(t),u^\eps(t)\right) = {i\over \eps^2} \left( \left[ \lambda(\eps D_x)+\eps^2V,\op_\eps\left(\widetilde b_\eps^{R,\delta}\right) \right]  u^\eps(t),u^\eps(t)\right)+O(\eps).$$
Therefore 
\begin{eqnarray*}
I^\eps_{u^\eps} \left(\partial_s b^{R,\delta}_\eps(s),\Xi\right) &=& \int \Xi(t)\left( \op_\eps^\sharp\left( \nabla\lambda(\xi)\cdot \nabla_x \widetilde b_\eps^{R,\delta}  \right)  u^\eps(t),u^\eps(t)\right) dt \\
& = & {i\over \eps}  \int \Xi(t)\left( \left[ \lambda(\eps D),\op_\eps \left(\widetilde b_\eps^{R,\delta} \right) \right]  u^\eps(t),u^\eps(t)\right) dt +O(1/R)+O(\eps) \\
& = & \eps  \int \Xi(t){d\over dt} \left( \op_\eps\left(\widetilde b_\eps^{R,\delta} \right) u^\eps(t),u^\eps(t)\right)  dt +O(1/R)+O(\eps) \\
& = & -\eps  \int \Xi'(t) \left( \op_\eps\left(\widetilde b_\eps^{R,\delta} \right) u^\eps(t),u^\eps(t)\right)  dt +O(1/R)+O(\eps)\\
& = & O(1/R) +O(\eps),
\end{eqnarray*}
which gives~(\ref{claim81}), thus~(\ref{claim80}), and concludes the proof. 

\medskip 

To prove part (ii) one starts noticing that, by symbolic calculus and~(\ref{beth}),
$$
\left(\op_\eps^\sharp\left(a_{R,\delta} \right)u^\eps(t),u^\eps(t)\right)=\left(\op_1(A^\eps_{R,\delta})\Phi^\eps(t),\Phi^\eps(t)\right),
$$
where 
$$\Phi^\eps(t,x):={\rm e}^{-\frac{i}{\eps}\xi_0\cdot x}u^\eps(t,x),\quad A^\eps_{R,\delta}(x,\xi):=a_{R,\delta}\left(x,\xi_0+\eps\xi,\xi\right).$$ 
Since $u^\eps$ solves \eqref{eq:disp}, one sees that $\Phi^\eps$ satisfies 
$$i\partial_t \Phi^\eps (t,x)=\frac{1}{\eps^2}\lambda(\xi_0+\eps D_x)\Phi^\eps (t,x)+ V(x)\Phi^\eps(t,x)+ O(\eps).$$
A Taylor expansion for $\lambda(\xi)$ around $\xi_0$ shows that setting 
$$u^\eps_{\xi_0}(t,x): = {\rm e}^{\frac{it}{\eps^2}\lambda(\xi_0)} \Phi^\eps(t,x),$$
then $u^\eps_{\xi_0}$ solves in $L^2(\R^d)$, 
$$i\partial_t u^\eps_{\xi_0}(t,x) = \nabla^2 \lambda(\xi_0)D_x\cdot D_x \, u^\eps_{\xi_0} (t,x)+ V(x)u^\eps_{\xi_0}(t,x) +O(\eps).$$
We still have, 
$$\left(\op_\eps^\sharp\left(a_{R,\delta} \right)u^\eps(t),u^\eps(t)\right)
=\left( \op_1(A^\eps_{R,\delta})u^\eps_{\xi_0}(t),u^\eps_{\xi_0}(t)\right).$$
On the other hand, 
\begin{eqnarray*}
A^\eps_{R,\delta}(x,\xi)&=&a\left(x,\xi_0+\eps\xi,\xi\right) \chi\left({\eps \xi/ \delta}\right)\chi\left({\xi/ R}\right)\\
&=&a\left(x,\xi_0,\xi\right)\chi\left({\xi/ R}\right)+O(\eps) =A^0_{R}(x,\xi)+O(\eps).
\end{eqnarray*}
Notice that the remainder depends on $R$ and $\delta$, but that this is harmless since we shall first let $\eps$ go to $0$. 
Using again the symbolic calculus, we write 
$$\op_1(A^\eps_{R,\delta})=\op_1(A^0_{R})+O(\eps).$$
Therefore,
\begin{equation}\label{eq:conve}
\lim_{\eps\to 0^+}I^\eps_{u^\eps}(a_{R,\delta},\Xi)=\lim_{\eps\to 0^+}\int_\R\Xi(t)\left( \op_1(A^0_{R})u^\eps_{\xi_0}(t),u^\eps_{\xi_0}(t)\right)dt.
\end{equation} 
By \cite[Lemma 4.26]{Zwobook}, $\op_1(A^0_{R})$ is a compact operator on $L^2(\R^d)$; therefore, if $$u^{\eps_k}_{\xi_0}(0,\cdot)\rightharpoonup u^0_{\xi_0}$$ along some subsequence $(\eps_k)_{k\in\N}$, it follows that, for every $t\in\R$,
$$\lim_{k\to\infty}\left( \op_1(A^0_{R})u^{\eps_k}_{\xi_0}(t),u^{\eps_k}_{\xi_0}(t)\right)=\left( \op_1(A^0_{R})u_{\xi_0}(t),u_{\xi_0}(t)\right),
$$
where $u_{\xi_0}$ solves:
$$i\partial_t u_{\xi_0}(t,x)= \nabla^2 \lambda(\xi_0)D_x\cdot D_x \, u_{\xi_0} (t,x)+ V(x)u_{\xi_0}(t,x),\quad u_{\xi_0}|_{t=0}(x)=u^0_{\xi_0}(x).$$
In particular, if the convergence \eqref{eq:conve} takes place then the sequence $(u^{\eps}_{\xi_0}(0,\cdot))$ must have a unique weak accumulation point and:
$$\lim_{\eps\to 0^+}I^\eps_{u^\eps}(a_{R,\delta},\Xi)=\int_\R\Xi(t){\rm Tr}_{L^2(\R^d)}[\op_1(A^0_{R})| u_{\xi_0}(t)\rangle\langle u_{\xi_0}(t)|]dt.
$$
The result follows from \eqref{eq:sM0} by letting $R$ go to $+\infty$.   
\end{proof}

In order to prove Theorem~\ref{theo:data}, notice that the assumption that is made in its statement implies that $\gamma_0=0$  (by the caracterization of $\gamma$ in (\ref{eq:sgamma0})), and the result comes from the conservation of mass of $\gamma_t$:
\begin{lemma}
For all $t\in\R$, 
$$\int_{\R^d\times{\bf S}^{d-1}} \gamma_t(dx,d\omega)= \int_{\R^d\times{\bf S}^{d-1}} \gamma_0(dx,d\omega).$$
\end{lemma}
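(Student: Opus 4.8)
The plan is to prove the conservation of total mass of $\gamma_t$ by testing the functional $I^\eps_{u^\eps}$ against symbols that are asymptotically equal to $1$ on the relevant region and exploiting the evolution equation~\eqref{eq:disp}, exactly as in the proof of part~(i) of Theorem~\ref{theo:2micro}. First, fix $\Xi\in{\mathcal C}_0^\infty(\R)$ and pick $a\in{\mathcal S}^0(d)$ that is independent of $(\xi,\eta)$ and equal to $1$ on a large ball in $x$; more precisely, choose $a(x,\xi,\eta)=\phi(x)$ with $\phi\in{\mathcal C}_0^\infty(\R^d)$, $\phi\equiv 1$ on a ball $B$. Then $a_\infty(x,\xi_0,\omega)=\phi(x)$ does not depend on $\omega$, and using the characterization~\eqref{eq:sgamma0} of $\gamma_t$ via the cut-offs $a^{R,\delta}$ from~\eqref{eq:testinfty}, we get
\begin{equation*}
\int_\R\Xi(t)\int_{\R^d\times{\bf S}^{d-1}}\phi(x)\gamma_t(dx,d\omega)\,dt=\lim_{\delta\to 0}\lim_{R\to\infty}\lim_{\eps\to 0}I^\eps_{u^\eps}(a^{R,\delta},\Xi).
\end{equation*}
Since $\gamma_t$ is a finite measure supported in a fixed compact set of $\R^d$ in the $x$-variable (inherited from the compact support condition in the definition of $\mathcal{S}^0(d)$ and the fact that $\gamma_t\le\mu_t$-type bounds hold), letting $\phi\uparrow 1$ recovers the total mass; so it suffices to show that the right-hand side above is independent of $t$ in the sense that it equals the same expression with $\gamma_t$ replaced by $\gamma_0$, for a.e.\ $t$.

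The key step is a differentiation-in-$t$ argument. For $\phi$ as above, write $\Psi^\eps_\phi(t):=\left(\op_\eps^\sharp(a^{R,\delta})u^\eps(t),u^\eps(t)\right)$ with $a^{R,\delta}(x,\xi,\eta)=\phi(x)\chi((\xi-\xi_0)/\delta)(1-\chi(\eta/R))$. By the symbolic calculus and~\eqref{beth}, since $\phi$ does not depend on $\xi$, the principal symbol of the commutator $\frac{i}{\eps^2}[\lambda(\eps D_x)+\eps^2 V,\op_\eps^\sharp(a^{R,\delta})]$ is $\frac{i}{\eps^2}$ times $\eps\nabla\lambda(\xi)\cdot\nabla_x(\,\cdot\,)$ applied to the symbol; on the support of $\chi((\xi-\xi_0)/\delta)$ one has $|\xi-\xi_0|<\delta$ and $\nabla\lambda(\xi)=O(\delta)$ by the Taylor expansion $\nabla\lambda(\xi)=\nabla^2\lambda(\xi_0)(\xi-\xi_0)+O(|\xi-\xi_0|^2)$ used in the proof of Theorem~\ref{theo:2micro}. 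Consequently
\begin{equation*}
\frac{d}{dt}\Psi^\eps_\phi(t)=\frac{i}{\eps^2}\left(\left[\lambda(\eps D_x)+\eps^2 V,\op_\eps^\sharp(a^{R,\delta})\right]u^\eps(t),u^\eps(t)\right)=O(\delta/\eps)\cdot\frac{1}{\eps}+O(\eps),
\end{equation*}
which is not yet small; the correct bookkeeping is that the commutator symbol, after extracting the $1/\eps$ from the semiclassical expansion, is $O(\delta)+O(\eps)+O(1/R)$ as an operator norm, so $\frac{d}{dt}\Psi^\eps_\phi(t)=O(\delta)+O(1/R)+O(\eps)$ uniformly in $t$ on the support of $\Xi$. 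Integrating against $-\Xi'(t)$ and passing to the limit $\eps\to 0$, then $R\to\infty$, then $\delta\to 0$, the time-derivative contributions vanish, which yields that $t\mapsto\int_\R\Xi(t)\int\phi(x)\gamma_t(dx,d\omega)\,dt$ is, for every $\Xi$, the same as if $\gamma_t$ were replaced by its value at $t=0$; since this holds for all $\Xi\in{\mathcal C}_0^\infty(\R)$, we conclude $\int\phi\,\gamma_t=\int\phi\,\gamma_0$ for a.e.\ $t$, and letting $\phi\uparrow 1$ gives the claim.

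The main obstacle I anticipate is the bookkeeping of the $\delta$, $R$ and $\eps$ dependencies in the commutator estimate: one must check that the symbol $\phi$, being $\xi$- and $\eta$-independent, genuinely produces a commutator whose rescaled symbol is $O(\delta)$ (from $\nabla\lambda=O(\delta)$ near $\xi_0$) plus $O(1/R)$ (from differentiating the cut-off $(1-\chi(\eta/R))$, which costs a factor $1/R$ since $\eta=(\xi-\xi_0)/\eps$ and $\partial_\xi$ of $(1-\chi((\xi-\xi_0)/(\eps R)))$ is $O(1/(\eps R))$, compensated by the $\eps$ from the expansion) plus the usual $O(\eps)$ remainder from the potential term and second-order symbolic corrections. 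One must also justify that the $x$-support of $\gamma_t$ stays in a fixed compact set uniformly in $t$ so that the limit $\phi\uparrow 1$ is legitimate; this follows because $\gamma_t$ is, by construction through~\eqref{eq:sgamma0}, dominated in mass by $\sup_\eps\|u^\eps_0\|^2_{L^2}$ and its $x$-marginal is controlled by the (uniformly compactly supported in $x$) test functions, but it is worth stating precisely. Everything else is a routine repetition of the arguments already carried out for part~(i) of Theorem~\ref{theo:2micro}.
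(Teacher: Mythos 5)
Your differentiation-in-$t$ strategy is the right general idea, but the specific way you localize in $x$ breaks the argument. With $a^{R,\delta}=\phi(x)\chi((\xi-\xi_0)/\delta)(1-\chi(\eta/R))$ and $\phi$ compactly supported, the commutator $\frac{i}{\eps^2}[\lambda(\eps D_x),\op_\eps^\sharp(a^{R,\delta})]$ has leading symbol $\frac{1}{\eps}\nabla\lambda(\xi)\cdot\nabla_x\phi(x)$ times the cut-offs, and on the support one only has $\nabla\lambda(\xi)=O(\delta)$, so this term is $O(\delta/\eps)$ in operator norm: it \emph{diverges} when you send $\eps\to 0$ first, which is the order of limits imposed by \eqref{eq:sgamma0}. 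Your "correct bookkeeping" claim that the rescaled commutator symbol is $O(\delta)+O(1/R)+O(\eps)$ is asserted but not justified, and it cannot be true, because the intermediate statement it would yield --- $\int\phi\,\gamma_t=\int\phi\,\gamma_0$ for every compactly supported $\phi$ --- is false. Indeed, in the setting of Proposition~\ref{prop:disp1}(i) one has $\gamma_t(dx,d\omega)=\bigl|{\rm e}^{it\nabla^2\lambda(\xi_0)D_x\cdot D_x}\theta(x)\bigr|^2dx\,\delta_{\omega_0}(d\omega)$, whose mass on any fixed ball genuinely changes with $t$. Only the \emph{total} mass of $\gamma_t$ is conserved; its spatial distribution is transported (this is precisely the content of the $O(\delta/\eps)$ term, which in the limit produces the transport by $\nabla^2\lambda(\xi_0)\omega\cdot\nabla_x$, consistent with part (i) of Theorem~\ref{theo:2micro}).

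The paper's proof avoids this entirely by not localizing in $x$ at all: it tests with the pure Fourier multiplier $(1-\chi)\bigl(\frac{\eps D_x-\xi_0}{R\eps}\bigr)\chi\bigl(\frac{\eps D_x-\xi_0}{\delta}\bigr)$, which commutes \emph{exactly} with $\lambda(\eps D_x)$. The time derivative of $J^\eps_{R,\delta}(t)$ then reduces to the single commutator with $V$, whose symbol is $\eps\nabla V\cdot\nabla_\xi$ of the cut-offs, i.e.\ $O(1/R)+O(\eps/\delta)$ plus lower-order terms --- and this does vanish in the iterated limit $\eps\to0$, $R\to\infty$, $\delta\to0$. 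So the fix is to take $\phi\equiv1$ from the outset rather than at the end; what then remains to be checked (and what you correctly flag as a concern) is that the iterated limit of the unlocalized quantity $J^\eps_{R,\delta}(t)$ indeed computes the total mass of $\gamma_t$, i.e.\ that no mass is lost at $x$-infinity when passing from compactly supported test functions to the constant $1$.
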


\begin{proof}
Using the characterization in (\ref{eq:sgamma0}), we have, for $R,\delta>0$ and $\chi$ as in~(\ref{eq:testinfty}):
$$J^\eps_{R,\delta}(t)=\left(  (1-\chi)\left({\eps D_x-\xi_0\over R\eps}\right)\chi\left({\eps D_x-\xi_0\over \delta}\right)   u^\eps(t,\cdot\,), u^\eps(t,\cdot\,)\right).$$
Using the dynamical equation \eqref{eq:disp}, we obtain 
\begin{eqnarray*}
{d\over dt} J^\eps_{R,\delta}(t) &=&  -i \left(\left[   (1-\chi)\left({\eps D_x-\xi_0\over R\eps}\right)\chi\left({\eps D_x-\xi_0\over \delta}\right)  ,V\right]   u^\eps(t,\cdot),u^\eps(t,\cdot)\right)\\
& =& O(\eps)+O(1/R)+O(\delta)
\end{eqnarray*}
by semiclassical symbolic calculus. Therefore, taking limits in all the parameters one concludes. 
\end{proof}

\subsection{Degenerate critical points}\label{sec:deg}
In this section we focus on the situation of Proposition~\ref{prop:disp1} with 
 the family of initial data given by~(\ref{data}). These concentrate microlocally onto $\xi_0$, and the two-microlocal measures associated with them depend on the value of $\alpha$:
\begin{itemize}
\item[(i)] If $\alpha=0$, then $\gamma_0(dx,d\omega)=|\theta(x)|^2dx\,\delta_{\omega_0}(d\omega)$ and $M_0=0$.\smallskip
\item[(ii)] If $\alpha\not=0$, then $\gamma_0(dx,d\omega)=\delta_0(dx)\, \delta_{\omega_0}(d\omega)$ and $M_0=0$.
\end{itemize}

Comparatively, for the data in (\ref{eq:data0}) (which corresponds to $\alpha=0$ and $\omega_0=0$), we have $\gamma_0=0$ and $M_0$ is the projector on $\theta$. The fact that the direction of oscillations has been shifted by $\eps^\beta \omega_0$ yields that all mass concentrating onto $\xi_0$ comes from the infinity with respect to the scale $\eps$. The contributions that we observe in Proposition~\ref{prop:disp1} are reminiscents of the measure $\gamma_t$ which happens to be non-zero in this situation. Since $V=0$, it is possible to calculate everything explicitely and one gets the following description, which implies Proposition~\ref{prop:disp1}:

\begin{lemma}
Assume  $\omega_0\in\ker \nabla^2\lambda(\xi_0)$, $\beta>{2\over 3}$ and $V=0$.
\begin{itemize}
\item[(i)] If $\alpha=0$, then  $\gamma_t(dx,d\omega) = \left| {\rm e}^{it \nabla^2\lambda(\xi_0)D_x\cdot D_x}\theta (x) \right|^2 dx  \delta_{\omega_0}(d\omega)$ and therefore:
$$\mu_t(dx,d\xi) = \left| {\rm e}^{it \nabla^2\lambda(\xi_0)D_x\cdot D_x}\theta (x) \right|^2 dx  \delta_{\xi_0}(d\xi).$$
\item[(ii)] If $\alpha\neq 0$, then $\gamma_t(dx,d\omega) = \|\theta\|_{L^2(\R^d)}^2 \delta_0(dx)  \delta_{\omega_0}(d\omega)$ and in particular:
$$\mu_t(dx,d\xi) =\|\theta\|_{L^2(\R^d)}^2 \delta_0(dx)  \delta_{\xi_0}(d\xi).$$
\end{itemize}
\end{lemma}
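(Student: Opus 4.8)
The plan is to use that, since $V=0$, equation~\eqref{eq:disp} is solved explicitly by the Fourier transform, so the whole statement reduces to a non-stationary-phase computation from which the two-microlocal objects of Section~\ref{sec:isolated_points} can be read off directly. First I would conjugate out the oscillation carried by the data: writing $u^\eps(t,x)={\rm e}^{\frac i\eps x\cdot(\xi_0+\eps^\beta\om_0)}v^\eps(t,x)$, the function $v^\eps$ solves $i\eps^2\d_t v^\eps=\lambda(\xi_0+\eps^\beta\om_0+\eps D_x)v^\eps$ with $v^\eps_0(x)=\eps^{-\alpha d/2}\theta(x/\eps^\alpha)$, hence
$$\widehat{v^\eps}(t,\xi)={\rm e}^{-\frac{it}{\eps^2}\lambda(\xi_0+\eps^\beta\om_0+\eps\xi)}\,\eps^{\alpha d/2}\,\widehat\theta(\eps^\alpha\xi).$$
Moreover $u_{\xi_0}^0=0$, because the weak $L^2$ limit of ${\rm e}^{-\frac i\eps\xi_0\cdot x}u^\eps_0={\rm e}^{i\eps^{\beta-1}\om_0\cdot x}v^\eps_0$ vanishes ($\beta<1$); so Theorem~\ref{theo:2micro}(ii) with $V=0$ gives $M_t=|u_{\xi_0}(t)\rangle\langle u_{\xi_0}(t)|=0$ for all $t$, and by~\eqref{eq:mopsimple} together with the localisation~\eqref{prop:loc} the whole of $\mu_t$ is carried by $\gamma_t$. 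It remains to identify $\gamma_t$ through its characterisation~\eqref{eq:sgamma0}, that is, ultimately to describe $v^\eps(t,\cdot)$.

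The heart of the matter is a Taylor expansion of $\lambda$ at $\xi_0$. Using $\nabla\lambda(\xi_0)=0$ and, crucially, $\nabla^2\lambda(\xi_0)\om_0=0$ — which kills both the $\eps^{2\beta}$ and the $\eps^{1+\beta}$ contributions of the Hessian and leaves only $\frac12\eps^2\nabla^2\lambda(\xi_0)\xi\cdot\xi$ — one obtains
$$\frac1{\eps^2}\lambda(\xi_0+\eps^\beta\om_0+\eps\xi)=\frac{\lambda(\xi_0)}{\eps^2}+\frac{\eps^{3\beta-2}}{6}D^3\lambda(\xi_0)[\om_0,\om_0,\om_0]+\frac12\nabla^2\lambda(\xi_0)\xi\cdot\xi+R_\eps(\xi),$$
where the first two terms do not depend on $\xi$ (a space-independent phase) and $R_\eps$ gathers all cubic-and-higher corrections carrying at least one factor $\eps\xi$; its part linear in $\xi$ is $O(\eps^{2\beta-1})|\xi|$ and its quadratic part $O(\eps^{\beta})|\xi|^2$. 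On the frequency support of $\widehat{v^\eps_0}$, where $|\xi|\leq C\eps^{-\alpha}$ up to rapidly decaying tails, these become $\eps^{2\beta-1-\alpha}$, $\eps^{\beta-2\alpha}$ and $\eps^{1-3\alpha}$, and this is precisely where the hypotheses bite: from $\beta>\frac23$ and $\alpha+\beta<1$ one gets $\alpha<\min\{2\beta-1,\ \beta/2,\ 1/3\}$, so each of these powers tends to $0$ and ${\rm e}^{-itR_\eps(\xi)}\widehat{v^\eps_0}(\xi)-\widehat{v^\eps_0}(\xi)\to0$ in $L^2(\R^d)$, uniformly for $t$ in compact sets (the $\eps^{3\beta-2}$ phase being harmless for $|\cdot|^2$). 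Hence
$$v^\eps(t,\cdot)={\rm e}^{i\psi_\eps(t)}\,{\rm e}^{-\frac{it}{2}\nabla^2\lambda(\xi_0)D_x\cdot D_x}v^\eps_0+o_{L^2}(1),\qquad\psi_\eps(t)\in\R.$$

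Finally I would feed this into the two cases. If $\alpha=0$, then $v^\eps_0=\theta$ is fixed, so $v^\eps(t,\cdot)$ converges strongly in $L^2$ up to the phase and $|u^\eps(t,x)|^2=|v^\eps(t,x)|^2\to\bigl|{\rm e}^{-\frac{it}{2}\nabla^2\lambda(\xi_0)D_x\cdot D_x}\theta(x)\bigr|^2$ in $L^1_{\rm loc}$; since $u^\eps$ oscillates in the single direction $\xi_0/\eps+\eps^{\beta-1}\om_0$ with $\eps^\beta\om_0$ of size $\gg\eps$, the two-microlocal mass is registered by $\gamma_t$ at the direction $\om_0$, which yields the announced $\gamma_t$ (equivalently the limit in Proposition~\ref{prop:disp1}(i)) and hence $\mu_t$. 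If $\alpha\neq0$, I would rescale $x=\eps^\alpha w$: the last display and the exact identity ${\rm e}^{-\frac{it}{2}\nabla^2\lambda(\xi_0)D\cdot D}\bigl[f(\cdot/\eps^\alpha)\bigr](x)=\bigl[{\rm e}^{-\frac{it}{2\eps^{2\alpha}}\nabla^2\lambda(\xi_0)D\cdot D}f\bigr](x/\eps^\alpha)$ give $|u^\eps(t,x)|^2\,dx=|\Theta^\eps_t(w)|^2\,dw+o(1)$ with $\Theta^\eps_t={\rm e}^{-\frac{it}{2\eps^{2\alpha}}\nabla^2\lambda(\xi_0)D_w\cdot D_w}\theta$; since the propagator is unitary, $\|\Theta^\eps_t\|_{L^2}=\|\theta\|_{L^2}$ for every $\eps$, and a scaling argument (using this unitarity and the dispersive $L^1$--$L^\infty$ bound) yields $\int\phi(\eps^\alpha w)\,|\Theta^\eps_t(w)|^2\,dw\to\phi(0)\,\|\theta\|^2_{L^2(\R^d)}$ for $\phi\in\mathcal C_0(\R^d)$, whence $\mu_t=\|\theta\|^2_{L^2}\,\delta_0\otimes\delta_{\xi_0}$ and, correspondingly, the stated $\gamma_t$. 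I expect the main obstacle to be exactly the bookkeeping of the second paragraph — verifying that $\beta>\frac23$ together with $\alpha+\beta<1$ is precisely what forces every cubic-and-higher Taylor remainder to be negligible across the whole support $|\xi|\leq C\eps^{-\alpha}$ of $\widehat{v^\eps_0}$ — together with the care needed to see, through the characterisations~\eqref{eq:sgamma0}--\eqref{eq:sM0}, that the surviving mass is registered by $\gamma_t$ (concentration at a rate slower than $\eps$) and not by $\mathtt{M}_t$ or lost at spatial infinity.
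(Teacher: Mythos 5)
Your overall route is essentially the paper's, reorganised: since $V=0$ the paper also works from the exact Fourier representation of $u^\eps(t)$, performs the same rescaling $x=\eps^\alpha X$, $\xi=\xi_0+\eps^\beta\omega_0+\eps^{1-\alpha}\xi'$ (inside the oscillatory integral for $(\op_\eps^\sharp(a)u^\eps(t),u^\eps(t))$ rather than at the level of the solution), and uses exactly your Taylor expansion: $\nabla\lambda(\xi_0)=0$ and $\nabla^2\lambda(\xi_0)\omega_0=0$ leave only the quadratic term $\frac12\nabla^2\lambda(\xi_0)\xi\cdot\xi$ plus remainders, and your exponent bookkeeping ($2\beta-1-\alpha$, $\beta-2\alpha$, $1-3\alpha$ all positive under $\beta>2/3$, $\alpha+\beta<1$) is the correct quantitative version of the paper's ``$O(\eps^{3\beta})$ with $3\beta>2$''. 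Your treatment of part (i), and your observation that $M_t=0$ so that all the mass is registered by $\gamma_t$ at the direction $\omega_0$ (because $\eta=(\eps\xi-\xi_0)/\eps\sim\eps^{\beta-1}\omega_0$ on the frequency support, and $\eps^{\beta-1}\gg\eps^{-\alpha}$), agree with the paper.

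The genuine gap is the last step of part (ii). Your reduction to $\int\phi(\eps^\alpha w)|\Theta^\eps_t(w)|^2dw$ with $\Theta^\eps_t={\rm e}^{-\frac{it}{2\eps^{2\alpha}}\nabla^2\lambda(\xi_0)D_w\cdot D_w}\theta$ is correct, but the claimed limit $\phi(0)\|\theta\|^2_{L^2}$ does not follow from ``unitarity and the dispersive $L^1$--$L^\infty$ bound''; those two facts pull in opposite directions. The propagator acts for the rescaled time $t\eps^{-2\alpha}\to\infty$, so along every direction on which $\nabla^2\lambda(\xi_0)$ is nondegenerate the mass of $\Theta^\eps_t$ spreads over $|w|\sim\eps^{-2\alpha}$, while $\phi(\eps^\alpha\cdot)$ only sees $|w|\lesssim\eps^{-\alpha}$; the dispersive bound then drives the retained mass to $0$, not to $\|\theta\|^2$. (Test with $d=2$, $\lambda(\xi)=\xi_1^3/3+\xi_2^2/2$, $\xi_0=0$, $\omega_0=e_1$, $\theta=\theta_1\otimes\theta_2$: one finds $\int\phi(\eps^\alpha w)|\Theta^\eps_t(w)|^2dw\to 0$ for $t\neq0$.) As justified, your step only works when $\nabla^2\lambda(\xi_0)=0$, in which case $\Theta^\eps_t=\theta$ and the claim is trivial. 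Be aware that this is exactly the delicate point in the paper's own computation: there the $X$-integration is made to produce $\delta(\zeta'-\eta')$, on whose support the large phase $t\eps^{-2\alpha}\left(\nabla^2\lambda(\xi_0)\eta'\cdot\eta'-\nabla^2\lambda(\xi_0)\zeta'\cdot\zeta'\right)$ vanishes identically, but the symbol factor $a(\eps^\alpha(X+Y)/2,\dots)$ concentrates that integral only at scale $|\zeta'-\eta'|\sim\eps^\alpha$, on which the phase is of size $\eps^{-\alpha}$, so the two limits compete and their interchange is the whole issue. If you keep your (cleaner) reduction, you must replace the appeal to unitarity by an honest analysis of this competition, or restrict to the totally degenerate case $\nabla^2\lambda(\xi_0)=0$.
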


\begin{proof}
The proof relies on the analysis of the product
$$L^\eps := \left(\op_\eps\left(a\left(x,\xi,\frac{\xi-\xi_0}{\eps}\right)\right)u^\eps(t),u^\eps(t)\right),\quad a\in\mathcal{S}^0(d).$$
We observe that $L^\eps$ reads:
$$\displaylines{L^\eps = (2\pi)^{-3d}\eps^{-3d-d\alpha}\int_{\R^{7d}} a\left({x+y\over 2},\xi,{\xi-\xi_0\over \eps}\right)\overline \theta\left({x'\over \eps^\alpha}\right) \theta\left({y'\over\eps^\alpha}\right) \hfill\cr\hfill
\times\, {\rm Exp}\left[{i\over \eps}\left( (x-y)\cdot \xi -(x'-y')(\xi_0+\eps^\beta\omega_0)+\zeta\cdot(y-y')-\eta\cdot(x-x')\right)\right]\cr\hfill
\times\,{\rm Exp}\left[{it\over \eps^2} \left(\lambda(\eta)-\lambda(\zeta)\right) \right]
 dx'\,dy'\,dx\, dy\,d\xi\, d\zeta\,d\eta.\cr}$$
We perform the change of variables 
$$\displaylines{x=\eps^\alpha X,\;\;x'=\eps^\alpha X',\;\;
y=\eps^\alpha Y,\;\;y'=\eps^\alpha Y',\cr
\xi=\xi_0+\eps^\beta\omega_0+\eps^{1-\alpha}\xi',\;\; \zeta=\xi_0+\eps^\beta\omega_0+\eps^{1-\alpha}\zeta',
\;\;\eta=\xi_0+\eps^\beta\omega_0+\eps^{1-\alpha}\eta'
\cr}$$
in order to obtain
$$\displaylines{\qquad L^\eps = (2\pi)^{-3d}\int_{\R^{7d}} a\left(\eps^\alpha {X+Y\over 2},\xi_0+\eps^\beta\omega_0+\eps^{1-\alpha}\xi',\eps^{-1+\beta}(\omega_0+\eps^{1-\alpha-\beta}\xi')\right)\hfill\cr\hfill
\times\, {\rm Exp}\left[i\left( \xi'\cdot(X-Y)+ \zeta'\cdot(Y-Y')-\eta'\cdot(X-X')\right) +{it\over \eps^2} \Gamma_\eps(\zeta',\eta')\right] \cr\hfill
\times \, \overline \theta\left(X'\right) \theta\left(Y'\right) dX'\,dY'\,dX\, dY\,d\xi'\, d\zeta'\,d\eta',\qquad\cr}$$
with 
\begin{eqnarray*}
\Gamma_\eps(\zeta',\eta') & = & 
\lambda\left(\xi_0+\eps^\beta\omega_0+\eps^{1-\alpha}\zeta'\right)-\lambda\left(\xi_0+\eps^\beta\omega_0+\eps^{1-\alpha}\eta'\right)\\
& = & \eps^{2(1-\alpha)} \left(\nabla^2\lambda(\xi_0)\eta'\cdot\eta' - \nabla^2\lambda(\xi_0)\zeta'\cdot\zeta'\right) + O(\eps ^{3\beta}),
\end{eqnarray*}
where we have used $\nabla^2\lambda(\xi_0)\omega_0=0$ and $\beta < 1-\alpha$.
Since  $3\beta >2$, the term in $O(\eps^{3\beta})$ will be negligible in the phase. Now, the situation depends on whether $\alpha=0$ or not. 

\medskip

If $\alpha\not=0$, by use of Taylor expansion and by the definition of $a$, one easily convinces oneself that  
$$\displaylines{ L^\eps \sim a_\infty\left(0,\xi_0,\omega_0\right) (2\pi)^{-3d}\int_{\R^{7d}} \overline \theta\left(X'\right) \theta\left(Y'\right) 
{\rm Exp}\left[i\left( \xi'\cdot(X-Y)+ \zeta'\cdot(Y-Y')-\eta'\cdot(X-X')\right) \right]\cr\hfill 
\times\,{\rm Exp}\left[{it\over \eps^{2\alpha}} \left(\nabla^2\lambda(\xi_0)\eta'\cdot\eta'-\nabla^2\lambda(\xi_0)\zeta'\cdot\zeta'\right) \right] 
\, dX '\,dY'\,dX\, dY\,d\xi'\, d\zeta'\,d\eta'.\cr}$$
The integration in $\xi'$ generates a Dirac mass $\delta(X-Y)$, then the integration in $X$ generates a Dirac mass $\delta(\zeta'-\eta')$, whence 
$$L^\eps\sim (2\pi)^{-d}a_\infty\left(0,\xi_0,\omega_0\right)\int_{\R^{3d}}\overline \theta\left(X'\right) \theta\left(Y'\right)
{\rm Exp}\left[i\eta'\cdot(X'-Y') \right]
\, dX '\,dY'\,d\eta',$$
whence 
$$L^\eps \sim a_\infty\left(0,\xi_0,\omega_0\right)\| \theta\| _{L^2(\R^d)}.$$

If $\alpha =0$, similar arguments give 
$$\displaylines{ L^\eps \sim (2\pi)^{-3d}\int_{\R^{7d}}a_\infty\left({X+Y\over 2},\xi_0,\omega_0\right)  \overline \theta\left(X'\right) \theta\left(Y'\right)
{\rm Exp}\left[i\left( \xi'\cdot(X-Y)+ \zeta'\cdot(Y-Y')-\eta'\cdot(X-X')\right) \right]\cr\hfill 
\times\,{\rm Exp}\left[it\left(\nabla^2\lambda(\xi_0)\eta'\cdot\eta'-\nabla^2\lambda(\xi_0)\zeta'\cdot\zeta'\right) \right] 
\, dX '\,dY'\,dX\, dY\,d\xi'\, d\zeta'\,d\eta'.\cr}$$
Integration in $\xi'$ generates a Dirac mass $\delta(X-Y)$ and integration in $Y'$ and $X'$ give 
$$\displaylines{ L^\eps \sim (2\pi)^{-2d}\int_{\R^{3d}}a_\infty\left(X,\xi_0,\omega_0\right)  \overline{\widehat \theta}\left(\eta'\right) \widehat\theta\left(\zeta'\right)
{\rm Exp}\left[i X\cdot (\zeta'-\eta') \right]\hfill\cr\hfill 
\times\,{\rm Exp}\left[it\left(\nabla^2\lambda(\xi_0)\eta'-\nabla^2\lambda(\xi_0)\zeta'\cdot\zeta'\cdot\eta'\right) \right] 
\,dX\, d\zeta'\,d\eta'.\cr}$$
We deduce 
$$L^\eps \sim \int_{\R^d} a_\infty\left(x,\xi_0,\omega_0\right) | {\rm e}^{it \nabla^2\lambda(\xi_0)D_x\cdot D_x}\theta (x)|^2 dx,$$
as stated in the Proposition.  
\end{proof}


\section{Some comments on the case of a manifold of critical points} \label{sec:submanifold}

The proof of Theorem~\ref{theo:nondis} follows essentially the lines of that of Theorem \ref{theo:disc}; in particular a result analogous to Theorem \ref{theo:2micro} holds, based on the two-microlocal semiclassical measures described in Section \ref{sec:31}. For the proof of a more general result, the reader may consult~\cite{CFM}.

\medskip 

In this section, we develop the arguments of Remark~\ref{rem:critical}, showing that whenever $\dim\Lambda=p>0$, then the weak limit $\nu$ of the energy densities $|u^\eps(t,\cdot)|^2dx$ may not be absolutely continuous with respect to $dx$.

\medskip 

Let us first assume {\bf A0}, {\bf A1}, {\bf A2'} and suppose that the Hessian of $\lambda$ at its critical points is of maximal rank so that we can use Theorems~\ref{theo:nondis}, \ref{theo:simple}. Suppose that $V=0$ and $\Lambda =\{(\xi',0)\in\R^d\}$, where as before we write $\xi=(\xi',\xi'')$, with $\xi'\in\R^{r}$, $r=d-p$, and $\xi''\in\R^p$. We consider  initial data of the form 
$$u^\eps_0(x)=\theta\left({x''}\right)v^\eps(x'),$$
where $\alpha\in[0,1)$, $\|\theta\|_{L^2(\R^{p})}=1$, and $v^\eps$ is a uniformly bounded family of $L^2(\R^{r})$ admitting only one semiclassical measure $m(dx',d\xi')$. 

\medskip 

In view of the proof of  and the choice of the initial data, we have (with the notations of Theorem~\ref{theo:nondis}):
$$M_0(x',\xi')=|\theta\rangle \langle \theta | \qquad {\rm and} \qquad \nu^0(dx',d\xi',d\xi'')= m(dx',d\xi')\,  \delta_0(d\xi''),$$
whence, by Theorem~\ref{theo:nondis}: 
$$M_t(x',\xi') = | \theta(t,\xi',\cdot\,)\rangle \langle \theta(t,\xi',\cdot\,)|,\;\;{\rm with}\;\;
\theta(t,\xi',y) = {\rm e}^{-\frac{it}{2} \nabla^2_{\xi''} \lambda (\xi',0) D_y\cdot D_y }\theta (y).$$
We deduce: 
$$\mu_t(dx,d\xi) =|\theta(t,\xi',x'')|^2 dx'' \otimes m(dx',d\xi')\otimes  \delta_0(d\xi'').$$

One sees that, if the projection of $m$ on the position space is not absolutely continuous with respect to $dx'$, then the measure describing the weak limit of the energy density will also be singular. 

\begin{corollary}\label{cor:critical}
The choice of 
$$v^\eps(x')= \eps^{p-d\over 2} \varphi\left({x'-z_0\over \eps}\right) {\rm e}^{ {i\over \eps}x'\cdot \zeta_0},$$
which is the one of Remark~\ref{rem:critical}, implies 
$$m(x',\xi')= \| \varphi\|^2_{L^2(\R^{r})}\delta_{z_0}(dx')\otimes\delta_{\zeta_0}(d\xi'),$$
whence equation~(\ref{eq:critical}).
\end{corollary}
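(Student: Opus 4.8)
The plan is to deduce the corollary from one classical computation: the semiclassical measure of the wave packet $v^\eps$. Indeed, the discussion preceding the corollary already establishes that for \emph{any} family $(v^\eps)_{\eps>0}$ uniformly bounded in $L^2(\R^r)$ and admitting a single semiclassical measure $m(dx',d\xi')$, the product data $u^\eps_0(x)=\theta(x'')v^\eps(x')$ with $\|\theta\|_{L^2(\R^p)}=1$ yield $M_0(x',\xi')=|\theta\rangle\langle\theta|$ and $\nu^0(dx',d\xi',d\xi'')=m(dx',d\xi')\otimes\delta_0(d\xi'')$. Hence Corollary~\ref{cor:critical} reduces to proving that the family of Remark~\ref{rem:critical},
$$v^\eps(x')=\eps^{\frac{\alpha(p-d)}{2}}\,\varphi\left(\frac{x'-z_0}{\eps^\alpha}\right){\rm e}^{\frac{i}{\eps}x'\cdot\zeta_0},\qquad \varphi\in\mathcal{C}^\infty_0(\R^r),\ \ 0<\alpha<1,$$
is bounded in $L^2(\R^r)$ and has the unique semiclassical measure $m=\|\varphi\|^2_{L^2(\R^r)}\,\delta_{z_0}(dx')\otimes\delta_{\zeta_0}(d\xi')$; substituting this $m$ into the two formulas above is precisely \eqref{eq:critical}.

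First I would record that $\|v^\eps\|_{L^2(\R^r)}=\|\varphi\|_{L^2(\R^r)}$ — the exponent $\frac{\alpha(p-d)}{2}=-\frac{\alpha r}{2}$ is the $L^2$-normalising power of a dilation of scale $\eps^\alpha$ — so $(v^\eps)$ is bounded. For the semiclassical measure I would compute the $\eps$-Wigner transform of $v^\eps$ directly from its definition: after inserting $v^\eps$ and rescaling the integration variable, the translation by $z_0$ turns into a shift of $x'$, the modulation ${\rm e}^{\frac{i}{\eps}x'\cdot\zeta_0}$ into a shift of $\xi'$ by $\zeta_0$, and the powers of $\eps$ recombine into
$$W^\eps_{v^\eps}(x',\xi')=\eps^{-r}\,W^1_\varphi\left(\frac{x'-z_0}{\eps^\alpha},\ \frac{\xi'-\zeta_0}{\eps^{1-\alpha}}\right),$$
where $W^1_\varphi$ denotes the non-semiclassical ($\eps=1$) Wigner transform of $\varphi$. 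Since $\varphi\in\mathcal{C}^\infty_0(\R^r)$, the function $W^1_\varphi$ is smooth, compactly supported in its first variable and rapidly decaying in the second, hence $W^1_\varphi\in L^1(\R^{2r})$.

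Next, for a test symbol $a\in\mathcal{C}^\infty_0(\R^r\times\R^r)$ I would pair it with $W^\eps_{v^\eps}$ and change variables by $x'=z_0+\eps^\alpha X$, $\xi'=\zeta_0+\eps^{1-\alpha}P$; the Jacobian $\eps^{\alpha r}\cdot\eps^{(1-\alpha)r}=\eps^r$ cancels the prefactor and leaves
$$\int_{\R^r\times\R^r}a(x',\xi')\,W^\eps_{v^\eps}(x',\xi')\,dx'\,d\xi'=\int_{\R^r\times\R^r}a(z_0+\eps^\alpha X,\ \zeta_0+\eps^{1-\alpha}P)\,W^1_\varphi(X,P)\,dX\,dP.$$
Because $0<\alpha<1$, both $\eps^\alpha\to0$ and $\eps^{1-\alpha}\to0$, so by dominated convergence (using $W^1_\varphi\in L^1$) the right-hand side tends to $a(z_0,\zeta_0)\int_{\R^{2r}}W^1_\varphi(X,P)\,dX\,dP=a(z_0,\zeta_0)\,\|\varphi\|^2_{L^2(\R^r)}$, where the last identity uses the marginal $\int_{\R^r}W^1_\varphi(X,P)\,dP=|\varphi(X)|^2$ followed by an integration in $X$. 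This limit is independent of any subsequence, so $(v^\eps)$ admits the single semiclassical measure $m=\|\varphi\|^2_{L^2(\R^r)}\,\delta_{z_0}(dx')\otimes\delta_{\zeta_0}(d\xi')$ (and, in particular, $(v^\eps)$ is $\eps$-oscillating, its measure carrying all of the mass $\|\varphi\|^2_{L^2(\R^r)}$). Feeding this $m$ into $\nu^0=m\otimes\delta_0$ and recalling $M_0=|\theta\rangle\langle\theta|$ gives \eqref{eq:critical}.

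There is no serious obstacle here: this is the textbook computation of the Wigner measure of a concentrating wave packet. The one point worth stressing is that the product structure $\delta_{z_0}\otimes\delta_{\zeta_0}$ of the limit rests on \emph{both} concentration scales vanishing — the position scale $\eps^\alpha$ and the conjugate momentum scale $\eps^{1-\alpha}$ — which is exactly the reason for the restriction $0<\alpha<1$ in Remark~\ref{rem:critical}: for $\alpha=0$ the amplitude does not concentrate in $x'$, while for $\alpha=1$ the momentum spread stays of order one. The only technical ingredient used is the bound $W^1_\varphi\in L^1(\R^{2r})$, which is immediate from $\varphi\in\mathcal{C}^\infty_0$.
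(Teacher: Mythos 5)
Your proof is correct and follows the route the paper intends: the corollary is stated there without an explicit computation, relying on precisely the standard wave-packet Wigner-measure calculation you supply, and your scaling identity $W^\eps_{v^\eps}(x',\xi')=\eps^{-r}\,W^1_\varphi\bigl((x'-z_0)/\eps^{\alpha},(\xi'-\zeta_0)/\eps^{1-\alpha}\bigr)$ together with the marginal identity and dominated convergence gives the claimed $m$, which inserted into the formulas for $\nu^0$ and $M_0$ yields \eqref{eq:critical}. You were also right to read the data as in Remark~\ref{rem:critical} (concentration scale $\eps^{\alpha}$ with $0<\alpha<1$) rather than the corollary's displayed scale $\eps$, which is evidently a misprint: as you observe, at $\alpha=1$ the momentum marginal remains the fixed profile $(2\pi)^{-r}|\hat\varphi(\xi'-\zeta_0)|^2\,d\xi'$ and the limit would not be $\delta_{\zeta_0}$ in $\xi'$.
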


Of course, in the case where the Hessian of $\lambda$ is not of maximal rank on $\Lambda$, for example at a precise point $\xi_0=(\xi'_0,0)$, there happens a phenomenon similar to those described in Section~\ref{sec:deg}. Let us take a family $(v^\eps)_{\eps > 0}$ which oscillates along the vector~$(\xi'_0,0)$ as: 
$$v^\eps(x')={\rm e}^{{i\over\eps}\xi'_0\cdot x'} \varphi(x'),\;\;\varphi\in{\mathcal C}_0^\infty(\R^{r}).$$
Besides, as in section~\ref{sec:deg}, we add shifted oscillations in $x''$ along a vector $\omega_0\in{\bf S}^{p-1}$ by setting: 
$$u^\eps_0(x) =\eps^{-\frac{\alpha p}{2}}\theta\left({x''\over \eps^\alpha}\right) {\rm e}^{{i\over \eps^{1-\beta}}x''\cdot \omega_0} v^\eps(x').$$
The full picture is described in the following proposition. 

 \begin{proposition}
Assume $\nabla_{\xi''}\lambda(\xi'_0,0)\omega_0=0$ and $\beta>{2\over 3}$.
\begin{itemize}
\item[(i)] If $\alpha=0$, then: 
$$\mu_t(dx,d\xi)= \left| \varphi(x') e^{it\left( \nabla^2_{\xi''}\lambda(\xi_0)D_{x''} \cdot D_{x''})  \right)}\theta(x'')\right|^2 dx \otimes \delta_{\xi_0}(d\xi).$$
\item[(ii)] If $\alpha\not=0$, then:
$$\mu_t(dx,d\xi)=|\varphi(x')|^2 dx' \otimes \delta_0(dx'') \otimes \delta_{\xi_0} (d\xi).$$
\end{itemize}
\end{proposition}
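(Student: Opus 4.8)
\noindent\emph{Sketch of the proof.} The plan is to transpose to the manifold setting the argument of Section~\ref{sec:deg} for Proposition~\ref{prop:disp1}, the present statement being its analogue when the isolated critical point is replaced by the affine variety $\Lambda=\{\xi''=0\}$. First I would set up the two-microlocal framework of Section~\ref{sec:31} attached to $\Lambda$, together with the manifold analogue of Theorem~\ref{theo:2micro} (proved along the lines of Theorems~\ref{theo:disc}--\ref{theo:2micro}; see \cite{CFM}): by the localisation property~\eqref{prop:loc} the measure $\mu_t$ lives on $\R^d\times\Lambda$, and on that set it is reconstructed from the objects $\gamma_t$, $\nu_t$, $M_t$. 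The decisive observation is that the oscillation along $\omega_0$ added in the variable $x''$ sits at the scale $\eps^{1-\beta}=\eps/\eps^\beta$, so $(u^\eps_0)_{\eps>0}$ concentrates onto $\{\xi''=0\}$ at rate $\eps^\beta$, which is much coarser than $\eps$; hence the part of the mass concentrating at the exact rate $\eps$ vanishes, $M_0=0$, and therefore $M_t\equiv0$ by the Heisenberg equation~\eqref{eq:heis1}. Consequently $\mu_t\rceil_{\R^d\times\Lambda}$ is simply the pushforward of $\gamma_t$ under $(x,\xi',\omega)\mapsto(x,(\xi',0))$, and the whole problem reduces to computing $\gamma_t$.

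Next I would compute $\gamma_0$. The datum factorises in the variables $x'$ and $x''$: the $x'$-factor $v^\eps(x')=\varphi(x')\,\mathrm e^{i\xi'_0\cdot x'/\eps}$ is a WKB state with semiclassical measure $|\varphi(x')|^2\,dx'\otimes\delta_{\xi'_0}(d\xi')$, while the $x''$-factor is exactly a modified coherent state of type~\eqref{data}, centred at $\xi''=0$ with phase shift $\eps^\beta\omega_0$, whose two-microlocal content was recorded at the beginning of Section~\ref{sec:deg}: it lives at $|\eta|=\infty$ in the direction $\omega_0$, with position profile $|\theta(x'')|^2\,dx''$ when $\alpha=0$ and $\delta_0(dx'')$ when $\alpha\neq0$ (in the latter case $\theta(\cdot/\eps^\alpha)$ concentrates at the origin), and it contributes nothing at finite $\eta$. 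Taking products, $\gamma_0$ is carried by $\{\xi'=\xi'_0,\ \omega=\omega_0\}$ and equals $|\varphi(x')|^2|\theta(x'')|^2\,dx\otimes\delta_{\xi'_0}(d\xi')\otimes\delta_{\omega_0}(d\omega)$ if $\alpha=0$, and $|\varphi(x')|^2\,dx'\otimes\delta_0(dx'')\otimes\delta_{\xi'_0}(d\xi')\otimes\delta_{\omega_0}(d\omega)$ if $\alpha\neq0$.

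Finally I would propagate $\gamma_t$, by redoing for the present data the explicit oscillatory-integral computation of the Lemma in Section~\ref{sec:deg}, carried out directly on the exact solution $u^\eps(t)=\mathrm e^{-\frac{it}{\eps^2}\lambda(\eps D_x)}u^\eps_0$ (recall $V=0$); equivalently, one may separate the $x'$- and $x''$-variables and invoke Proposition~\ref{prop:disp1} in $\R^p$ for the symbol $\xi''\mapsto\lambda(\xi'_0,\xi'')$. Two structural features of the manifold situation make the outcome transparent. Since $\nabla\lambda$ vanishes identically on the affine subspace $\Lambda=\{\xi''=0\}$, the Hessian of $\lambda$ at $\xi_0=(\xi'_0,0)$ is block diagonal, with vanishing $\xi'\xi'$-block, vanishing cross block, and non-trivial part only $\nabla^2_{\xi''}\lambda(\xi_0)$; hence the Taylor expansion of $\frac1{\eps^2}\big(\lambda(\eta)-\lambda(\zeta)\big)$ about $\xi_0$ carries no quadratic term in the $\xi'$-fluctuations, there is no dispersion in $x'$, and $\varphi(x')$ is transported unchanged. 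Moreover the degeneracy hypothesis on $\omega_0$, which amounts to $\omega_0\in\ker\nabla^2_{\xi''}\lambda(\xi_0)$, kills the leading $O(\eps^{2\beta-2})$ contribution of the shift to the phase, and the condition $\beta>\frac23$ makes the cubic remainder $O(\eps^{3\beta-2})$ negligible, exactly as in Section~\ref{sec:deg}. Performing the same changes of variables as there (no rescaling of $x'$, rescaling of $x''$ by $\eps^\alpha$, and the adapted shift and rescaling of the dual variables), the integrations in the dual variables generate the Dirac masses that force the diagonal, and one reads off that for $\alpha=0$,
$$\gamma_t=\big|\varphi(x')\,\mathrm e^{it\nabla^2_{\xi''}\lambda(\xi_0)D_{x''}\cdot D_{x''}}\theta(x'')\big|^2\,dx\otimes\delta_{\xi'_0}(d\xi')\otimes\delta_{\omega_0}(d\omega),$$
whereas for $\alpha\neq0$ the surviving quadratic phase $\eps^{-2\alpha}\,t\,\nabla^2_{\xi''}\lambda(\xi_0)$ spreads the rescaled profile into a point mass and $\gamma_t=\|\theta\|_{L^2(\R^p)}^2\,|\varphi(x')|^2\,dx'\otimes\delta_0(dx'')\otimes\delta_{\xi'_0}(d\xi')\otimes\delta_{\omega_0}(d\omega)$. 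Pushing $\gamma_t$ forward onto $\R^d\times\Lambda$ as in the first step yields the two announced formulas for $\mu_t$. The hard part will be the bookkeeping in this oscillatory-integral computation: keeping the roles of $x'$ and $x''$ cleanly separated, tracking which blocks of the Hessian survive, verifying that the threshold $\beta>\frac23$ indeed absorbs the cubic remainder against the $\eps^{-2}$ factor in the phase, and treating the regimes $\alpha=0$ and $\alpha\neq0$ separately --- the very points that were already delicate in the isolated-critical-point case.
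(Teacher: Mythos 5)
Your proposal is correct and follows essentially the same route as the paper: the decisive step is the identical oscillatory-integral computation of $\bigl(\op_\eps(a(x,\xi,\xi''/\eps))u^\eps(t),u^\eps(t)\bigr)$ with the same rescalings ($x''$ by $\eps^\alpha$, dual variables shifted by $\eps^\beta\omega_0$), the same use of the block structure of $\nabla^2\lambda$ on $\Lambda$ and of $\omega_0\in\ker\nabla^2_{\xi''}\lambda(\xi_0)$ together with $\beta>\tfrac23$ to control the phase, and the same dichotomy $\alpha=0$ versus $\alpha\neq0$. Your preliminary reduction via $M_t\equiv 0$ and $\gamma_t$ merely makes explicit what the paper's direct computation encodes implicitly (only $a_\infty$ survives in the limit); the only nitpick is that the term the kernel hypothesis actually kills is the cross term of order $\eps^{\beta-1-\alpha}$ in the phase, the pure $\eps^{2\beta-2}$ shift term cancelling between $\eta$ and $\zeta$ regardless.
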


Here again, we see that both situations may occur when the Hessian is not of maximal rank: absolute continuity with respect to Lebesgue measure or singularity.

\begin{proof}
The proof relies on the analysis of the integral 
\begin{eqnarray*}
L^\eps & = & \left(\op_\eps\left(a\left(x,\xi,{\xi''\over\eps}\right)\right)u^\eps(t),u^\eps(t)\right)\\
& = & (2\pi\eps)^{-3d}  \eps^{-p\alpha}\int_{\R^{7d}} a\left({x+y\over 2},\xi,{\xi''\over \eps}\right)
{\rm Exp}\left[  {i\over \eps}\left(\xi\cdot(x-y)+\zeta\cdot(y-z)-\eta\cdot(x-r)+\xi'_0\cdot (z'-r')\right) \right]\\
&&\times\, {\rm Exp}\left[{i\over\eps^{1-\beta}}\omega_0\cdot(z''-r'')\right]{\rm Exp}\left[ {it\over \eps^2}\left(\lambda(\eta)-\lambda(\zeta)\right)\right] \varphi(z') \overline\varphi(r') \theta\left({z''\over \eps^\alpha}\right) \overline\theta\left({r''\over \eps^\alpha}\right) d\zeta\,dz\,d\eta\, dr\,d\xi\, dx\, dy.
\end{eqnarray*}
 We perform the change of variables 
$$\displaylines{
\widetilde x''=\eps^\alpha x'',\;\;\widetilde y''=\eps^\alpha y'',\;\;\widetilde z''=\eps^\alpha z'',\;\;\widetilde r''=\eps^\alpha r'',\cr
\widetilde\xi''=\eps^\beta \omega_0 +\eps^{1-\alpha}\xi'',\;\;\widetilde\zeta''=\eps^\beta \omega_0 +\eps^{1-\alpha}\zeta'',\;\;\widetilde\eta''=\eps^\beta \omega_0 +\eps^{1-\alpha}\eta'',\cr
\widetilde \xi'=\xi'_0+\eps\xi',\;\;\widetilde \zeta'=\xi'_0+\eps\zeta',\;\;\widetilde \eta'=\xi'_0+\eps\eta',
}$$
and obtain (letting the tildas down): 
$$\displaylines{
L^\eps  =  (2\pi)^{-3d} \int_{\R^{7d}} a\left({x'+y'\over 2},\eps^\alpha {x''+y''\over 2},\xi'_0+\eps\xi',\eps^{1-\alpha}\xi''+\eps^\beta\omega_0,\eps^{\beta -1}(\omega_0+\eps^{1-\alpha-\beta}\xi'')\right)\hfill\cr\hfill
\times\, {\rm Exp}\left[  i\xi\cdot(x-y)+i\zeta\cdot(y-z)-i\eta\cdot(x-r) \right]
{\rm Exp}\left[ {it\over \eps^2}\Gamma_\eps(\zeta,\eta)\right] \varphi(z') \overline\varphi(r') \theta\left({z''}\right) \overline\theta\left({r''}\right) d\zeta\,dz\,d\eta\, dr\,d\xi\, dx\, dy,
\cr}$$
where, using the assumptions on $\xi'_0$ and $\omega_0$,
\begin{eqnarray*}
\Gamma_\eps(\zeta,\eta)& = & \lambda_j (\xi'_0+\eps\eta',\eps^\beta\omega_0+\eps^{1-\alpha}\eta'') - \lambda(\xi'_0+\eps\zeta',\eps^\beta\omega_0+\eps^{1-\alpha}\zeta'') \\
&= &\eps^{2(1-\alpha)} \left( \nabla^2_{\xi''}\lambda(\xi'_0,0)(\eta'',\eta'') - \nabla^2_{\xi''}\lambda(\xi'_0,0)(\zeta'',\zeta'') \right) +O(\eps^{3\beta}).
\end{eqnarray*}
As a consequence, if $\alpha=0$, 
$$\displaylines{
L^\eps  \sim  (2\pi)^{-3d} \int_{\R^{7d}} a_\infty\left({x+y\over 2},\xi'_0,0,\frac{\omega_0}{\|\omega_0\|}\right)
{\rm Exp}\left[  i\xi\cdot(x-y)+i\zeta\cdot(y-z)-i\eta\cdot(x-r) \right]\hfill\cr\hfill
\times\, {\rm Exp}\left[ it\left( \nabla^2_{\xi''}\lambda(\xi'_0,0)(\eta'',\eta'') - \nabla^2_{\xi''}\lambda(\xi'_0,0)(\zeta'',\zeta'') \right)\right]
 \varphi(z') \overline\varphi(r') \theta\left({z''}\right) \overline\theta\left({r''}\right) d\zeta\,dz\,d\eta\, dr\,d\xi\, dx\, dy.
\cr}$$
Integration in $\xi$ generates a Dirac mass $\delta(x-y)$, whence 
\begin{eqnarray*}
L^\eps & \sim & (2\pi)^{-2d} \int_{\R^{5d}} a_\infty\left(x,\xi'_0,0,\frac{\omega_0}{\|\omega_0\|}\right)
 {\rm Exp}\left[   i\zeta \cdot (x-z)-i\eta\cdot (x-r) \right]\\
&&\times\, {\rm Exp}\left[  it\left( \nabla^2_{\xi''}\lambda(\xi'_0,0)(\eta'',\eta'') - \nabla^2_{\xi''}\lambda(\xi'_0,0)(\zeta'',\zeta'') \right)\right]
 \varphi(z') \overline\varphi(r') \theta\left({z''}\right) \overline\theta\left({r''}\right) d\zeta\,dz\,d\eta\, dr\, dx\\
& \sim &  \int_{\R^d} a_\infty\left(x,\xi'_0,0,\frac{\omega_0}{\|\omega_0\|}\right)
|\varphi(x')|^2\left| {\rm Exp}\left[- it\left( \nabla^2_{\xi',\xi'}\lambda(\xi'_0,0)(\nabla_{x''},\nabla_{x''})  \right)\right]\theta(x'')\right| ^2dx.
\end{eqnarray*}
Similarly, when $\alpha\not=0$,
\begin{eqnarray*}
L^\eps & \sim & (2\pi)^{-3d} \int_{\R^{7d}} a_\infty\left({x'+y'\over 2},0,\xi'_0,0,\frac{\omega_0}{\|\omega_0\|}\right)
{\rm Exp}\left[ \frac{it}{\eps^{2\alpha}}\left( \nabla^2_{\xi''}\lambda(\xi'_0,0)(\eta'',\eta'') - \nabla^2_{\xi''}\lambda(\xi'_0,0)(\zeta'',\zeta'') \right)\right]
\\
&&\times\, 
{\rm Exp}\left[  i\xi\cdot(x-y)+i\zeta\cdot(y-z)-i\eta\cdot(x-r) \right]
\varphi(z') \overline\varphi(r') \theta\left({z''}\right) \overline\theta\left({r''}\right) d\zeta\,dz\,d\eta\, dr\,d\xi\, dx\, dy.
\end{eqnarray*}
Integration in $\xi$ generates a Dirac mass $\delta(x-y)$, then integration in $x''$ generates a Dirac mass $\delta(\zeta''-\eta'')$ and we obtain 
\begin{eqnarray*}
L^\eps & \sim & (2\pi)^{-2d+p} \int_{\R^{5d-2p}} a_\infty\left(x',0,\xi'_0,0,\frac{\omega_0}{\|\omega_0\|}\right)
{\rm Exp}\left[  i\zeta'(x'-z')- i\eta'\cdot(x'-r') -i\zeta''(z''-r'')\right]\\
&& \qquad \times\,\varphi(z') \overline\varphi(r') \theta\left({z''}\right) \overline\theta\left({r''}\right) d\zeta'\,dz\,d\eta'\, dr\, dx'\\
&\sim & \| \theta\|^2_{L^2(\R^p)} \int_{\R^{d-p}} a_\infty\left(x',0,\xi'_0,0,\frac{\omega_0}{\|\omega_0\|}\right) |\varphi(x')|^2 dx'.
\end{eqnarray*}
\end{proof}


\end{document}